\documentclass[11pt]{article}

\usepackage{graphicx}
\usepackage{wrapfig}
\usepackage{substr}
\usepackage[tight]{minitoc}
\usepackage{subfigure}
\usepackage{longtable}
\usepackage[all]{xy}
\usepackage{xcite}
\usepackage{xr}

\usepackage{amsfonts}
\usepackage{nicefrac}

\usepackage{amsmath,amssymb,amsthm}
\usepackage{mathtools}

\newcommand{\sR}{{\R^{d}}}
\newcommand{\SM}{\mathbb{S}} 
\newcommand{\N}{\mathbb{N}}
\newcommand{\sX}{{\R^{d_x}}}
\newcommand{\sY}{{\R^{d_y}}}
\newcommand{\conv}[1]{\operatorname{conv}(#1)}
\newcommand{\und}[2]{\underset{#2}{#1}}

\providecommand{\keywords}[1]{\text{\text{Keywords: }} #1}

\usepackage{fullpage}
\usepackage{cancel}
\usepackage[utf8]{inputenc} 
\usepackage[T1]{fontenc}    
\usepackage{hyperref}       
\usepackage{url}            
\usepackage{booktabs}       
\usepackage{amsfonts}       
\usepackage{nicefrac}       
\usepackage{microtype}      

\usepackage{authblk}
\usepackage{natbib}
\usepackage{sidecap}

\usepackage{amssymb, amsmath, amsthm, latexsym}
\usepackage{url}
\usepackage{algorithm}
\usepackage{algorithmic}
\usepackage{tabularx}
\usepackage{paralist}
\usepackage{mathtools}

\usepackage{bbm} 

\usepackage{makecell}
\usepackage{multirow}
\usepackage{booktabs}

\usepackage{nicefrac}       

\usepackage[flushleft]{threeparttable} 

\usepackage{caption}
\usepackage{subcaption}
\usepackage{multirow}
\usepackage{colortbl}
\definecolor{bgcolor}{rgb}{0.8,1,1}
\definecolor{bgcolor2}{rgb}{0.8,1,0.8}
\definecolor{niceblue}{rgb}{0.0,0.19,0.56}

\hypersetup{colorlinks,linkcolor={blue},citecolor={blue},urlcolor={blue}}

\usepackage[dvipsnames]{xcolor}
\usepackage{pifont}

\newcommand{\R}{\mathbb{R}}

\def\<#1,#2>{\left\langle #1,#2\right\rangle}

\usepackage{mdframed} 
\usepackage{thmtools}

\definecolor{shadecolor}{gray}{0.9}
\declaretheoremstyle[
headfont=\normalfont\bfseries,
notefont=\mdseries, notebraces={(}{)},
bodyfont=\normalfont,
postheadspace=0.5em,
spaceabove=1pt,
mdframed={
 skipabove=8pt,
 skipbelow=8pt,
 hidealllines=true,
 backgroundcolor={shadecolor},
 innerleftmargin=4pt,
 innerrightmargin=4pt}
]{shaded}

\declaretheorem[style=shaded,within=section]{definition}
\declaretheorem[style=shaded,sibling=definition]{theorem}

\declaretheorem[style=shaded,sibling=definition]{lemma}

\usepackage{xspace}

\usepackage[colorinlistoftodos,bordercolor=orange,backgroundcolor=orange!20,linecolor=orange,textsize=scriptsize]{todonotes}

\usepackage{hyperref}
\graphicspath{{plots/}}

\usepackage{makecell}

\usepackage{accents}
\newlength{\dhatheight}

\renewcommand{\geq}{\geqslant}

\newcommand\swapifbranches[3]{#1{#3}{#2}}
\makeatletter
\MHInternalSyntaxOn
\patchcmd{\DeclarePairedDelimiter}{\@ifstar}{\swapifbranches\@ifstar}{}{}
\MHInternalSyntaxOff
\makeatother

\DeclarePairedDelimiterX{\inp}[2]{\langle}{\rangle}{#1, #2}
\DeclarePairedDelimiterX{\abs}[1]{\lvert}{\rvert}{#1}
\DeclarePairedDelimiterX{\roundup}[1]{\lceil}{\rceil}{#1}
\DeclarePairedDelimiterX{\cbr}[1]{\{}{\}}{#1} 
\DeclarePairedDelimiterX{\rbr}[1]{(}{)}{#1} 
\DeclarePairedDelimiterX{\sbr}[1]{[}{]}{#1} %

\title{\bf Convex-Concave Interpolation and Application of PEP to Bilinear-Coupled Saddle-Point Problem}

\author[1]{Valery Krivchenko}
\author[2]{Alexander Gasnikov}
\author[3]{Dmitry Kovalev}

\affil[1]{Moscow Institute of Physics ans Technology,
Institutskiy Pereulok, 9, Dolgoprudny, Moscow Oblast, 141701, Russia}
\affil[2]{Innopolis University,
Universitetskaya St, 1, Innopolis, Republic of Tatarstan, 420500, Russia}
\affil[3]{Yandex Research, Ulitsa L'va Tolstogo, 16, Moscow, 119021, Russia}

\date{\today}

\begin{document}

\maketitle

\abstract{
The Performance estimation problem (PEP) approach reformulates finding the exact worst-case performance of an algorithm as the solution to an optimization problem. Tractable formulation of the problem requires necessary and sufficient interpolation conditions. We present the interpolation conditions for convex--concave functions, bilinear functions, and composite functions with bilinear coupling. We also construct PEP for first-order methods for composite saddle-point problem.}

\keywords{saddle point, convex--concave functions, performance estimation problem, interpolation conditions}

\footnote{A version of this work containing preliminary numerical experiments appeared in Russian Journal of Nonlinear Dynamics. This extended version contains more detailed description of PEP construction and the program for automatic generation of Lyapunov functions.}

\section{Introduction}

Consider saddle-point problem (SPP):
\begin{align} \label{problem:minmax}
	\und{\min}{x \in \sX} \und{\max}{y \in \sY} f(x,y),
\end{align}
where $f \colon \sX \times \sY \to \R$ is convex in $\mathbf{x}$ and concave in $\mathbf{y}$. The problem is widely studied in optimization and arises, besides mathematics, in economics, computer science and machine learning~\cite{goodfellow2020generative,li2019robust} where SPPs tend to have large dimensions. In the latter, first-order gradient methods become dominant algorithmic paradigm due to low cost per iteration and scalability. Unfortunately, their exact worst-case performance remains largely unexplored \cite{lee2024fundamental,zhang2022near,zhang2021unified} in minimax setting.

There was developed a principled way to approach worst-case computation problems, also known as Performance estimation problems (PEP) \cite{drori2014performance}. Its idea is to express worst-case performance as a solution of a certain optimization problem. General formulation of PEP is infinite dimensional because the performance metric is being optimized over an entire class of functions. To ensure tractability, optimization over functions is replaced with optimization over iterates, gradient and function values restricted by the interpolation conditions \cite{taylor2017smooth}. The conditions must be necessary and sufficient for interpolation so that there is equivalence of the problems' optimal values. The approach yielded numerous results in convex minimization and helped to develop optimal methods \cite{kim2016optimized}. Meanwhile, its application for SPPs is held back by the lack of the relevant interpolation conditions.

In this paper, we present necessary and sufficient interpolation conditions for nonsmooth convex--concave functions, bilinear functions and composite functions with bilinear coupling $p(x) + y^\top Ax - q(y)$. The latter are of significant practical importance and arise in different applications \cite{pmlr-v37-zhanga15, pmlr-v70-scaman17a, peyre2019computational, chambolle2016introduction}. For composite SPPs, we construct standard and Lyapunov \cite{taylor2018lyapunov} formulations of PEP.

\section{Preliminaries}

We work in a finite-dimensional real vector space $\mathbb{R}^d$ equipped with the standard inner product $\langle \cdot, \cdot \rangle$ and the induced Euclidean norm $\|\cdot\|_2$.

We refer to standard literature for the comprehensive treatment of convex analysis~\cite{bauschke2011convex} and minimax theory~\cite{rockafellar1970}.

\begin{definition} \label{fed:smooth_sc} Function $f: \sR \rightarrow \R \cup \{+\infty\}$ is $L$-smooth ($L > 0$), $\mu$-strongly-convex ($\mu \geq 0$), or $f \in \mathcal F_{\mu, L}$, if
\begin{align*}
    f(x) - \frac{\mu\|x\|_2^2}{2} \quad \text{is convex}
\end{align*}
and $\forall x_0,x_1 \in \R$, and $\forall g_0 \in \partial f(x_0), \ g_1 \in \partial f(x_1)$:
\begin{align*}
    \|g_1-g_0 \|_2 \leqslant L\|x_1-x_0\|_2.
\end{align*}    
\end{definition}

\begin{definition} \label{fed:base_convex_concave} Function $f \colon \sX \times \sY \to \R$ is convex--concave ($f \in \mathcal S$) if 
\begin{align*}
&f(\cdot,y): \text{convex } \  \forall y \in \sY, \\
&f(x,\cdot): \text{concave } \ \forall x \in \sX.
\end{align*}
\end{definition}

\begin{definition} \label{fed:gen_SCSC_smooth}
$f \colon \sX \times \sY \to \R$ is $\left(\mu_x,\mu_y\right)$-strongly-convex-strongly-concave (SCSC) and has $\left(L_x,L_y,L_{xy}\right)$-Lipschitz gradients ($f \in \mathcal S_{\mu_x,\mu_y,L_x,L_y,L_{xy}}$) if
\begin{align*}
&f\left(\cdot,y\right): \mu_x-\text{strongly convex}\quad \forall y \in \sY,\\
&f\left(x,\cdot\right): \mu_y-\text{strongly concave}\quad \forall x \in \sX,\\
&\|\nabla_x f(x_1,y) - \nabla_x f(x_0,y)\|_2 \leqslant L_x \|x_1 - x_0\|_2  \quad \forall x_0,x_1 \in \sX, y \in \sY,\\
&\|\nabla_y f(x,y_1) - \nabla_y f(x,y_0)\|_2 \leqslant L_y \|y_1 - y_0\|_2  \quad \forall y_0,y_1 \in \sY, x \in \sX,\\
&\|\nabla_x f(x,y_1) - \nabla_x f(x,y_0)\|_2 \leqslant L_{xy} \|y_1 - y_0\|_2  \quad \forall y_0,y_1 \in \sY, x \in \sX,\\
&\|\nabla_y f(x_1,y) - \nabla_y f(x_0,y)\|_2 \leqslant L_{xy} \|x_1 - x_0\|_2  \quad \forall x_0,x_1 \in \sX, y \in \sY,
\end{align*}
where $0 \leqslant \mu_x \leqslant L_x$, $0 \leqslant \mu_y \leqslant L_y$, and $L_{xy} \geqslant 0$.
\end{definition}

We define interpolability similarly to \cite{taylor2017smooth}, where it was introduced for convex functions.

\begin{definition} \label{fed:interpolation}
Let $I$ be an index set and consider a sequence $A_I = \left\{(x_i, y_i, g^x_i, g^y_i, f_i)\right\}_{i \in I}$ where $x_i, g^x_i \in \sX$, $y_i, g^y_i \in \sY$, $f_i \in \R$ for all $i \in I$. Consider a set of convex--concave functions $\mathcal S$. Sequence $A_I$ is $\mathcal S$-interpolable if and only if there exists function $f \in S$ such that $g^x_i \in \partial_x f(x_i, y_i)$, $g^y_i \in \partial_y f(x_i, y_i)$, $f_i = f(x_i, y_i)$ for all $i \in I$.
\end{definition}

\section{Interpolation conditions}

\subsection{Convex--concave functions}

\begin{definition} \label{fed:lip} $f \in \mathcal S$ is $M$-Lipschitz (or, $f\in \mathcal S^{Lip}_M$) if
    \begin{equation*}
        |f(x_1,y_1) - f(x_0,y_0)| \leqslant M \left(\|x_1 - x_0\|_2 + \|y_1 - y_0\|_2\right)
    \end{equation*}
    for all $x_1,x_0 \in \sX$, $y_1,y_0 \in \sY$.
\end{definition}

The assumption of bounded subgradients enables the analysis of subgradient-type methods in the non-differentiable setting \cite{KallioRosa1999,Nedic2009Subgradient}. We allow $M = +\infty$ so that there is no loss of generality.

\begin{theorem} \label{teo:Lipschitz}
Sequence $A_I$ is $\mathcal S^{Lip}_M$-interpolable if and only if the following conditions are satisfied:
\begin{align}
    &f_i \geqslant f_j + \langle g^x_j, x_i-x_j\rangle + \langle g^y_i,y_i-y_j \rangle \quad \forall i,j \in I, 
        \label{eq5} \\
    & \|g^x_i\|_2 \leqslant M,\quad \|g^y_i\|_2\leqslant M \quad \forall i\in I. \label{eq6}
\end{align}
\end{theorem}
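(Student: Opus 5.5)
Necessity and sufficiency are handled separately.

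\emph{Necessity.} Let $f \in \mathcal S^{Lip}_M$ interpolate $A_I$. Convexity of $f(\cdot,y_j)$ with $g^x_j \in \partial_x f(x_j,y_j)$ gives
\[
f(x_i,y_j) \geqslant f_j + \langle g^x_j, x_i - x_j\rangle .
\]
Concavity of $f(x_i,\cdot)$ with $g^y_i \in \partial_y f(x_i,y_i)$ gives
\[
f(x_i,y_j) \leqslant f_i + \langle g^y_i, y_j - y_i\rangle .
\]
Chaining the two inequalities yields \eqref{eq5}. For \eqref{eq6}, $M$-Lipschitzness of $f(\cdot,y_i)$ forces every partial subgradient to have norm at most $M$: test the subgradient inequality at $x_i + t g^x_i/\|g^x_i\|_2$ and let $t\to 0^+$. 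The $y$-part is symmetric.

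\emph{Sufficiency.} I would build an explicit max–min interpolant:
\[
f(x,y) = \max_{i\in I}\min_{j\in I}\Big[ f_j + \langle g^x_j, x - x_j\rangle - \langle g^y_i, y_i - y_j\rangle + \langle g^y_i, y - y_i\rangle \Big],
\]
or some variant of it. A cleaner choice that is manifestly convex–concave is
\[
f(x,y)=\max_{i} \Big[ \langle g^y_i, y - y_i\rangle + \phi_i(x) \Big], \qquad \phi_i(x)=\min_j\big(\cdots\big).
\]
The trouble is that such a maximum is convex in $y$, not concave. The right object is therefore the saddle of piecewise-linear pieces:
\[
f(x,y) = \min_{i}\,\Big[ f_i + \langle g^y_i, y-y_i\rangle + \max_{j}\big( \langle g^x_j, x - x_j\rangle + c_{ij}\big)\Big],
\]
with constants chosen so that $f$ is convex in $x$ for every $y$. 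This fails too, because a minimum of convex functions is not convex. The construction I would actually commit to is
\[
f(x,y)=\max_{j}\,\Big[\langle g^x_j, x-x_j\rangle + \min_{i}\big( f_i + \langle g^y_i, y-y_i\rangle - \langle g^x_j, x_i-x_j\rangle\big)\Big]?
\]
This is again problematic, since a max over $j$ of concave functions of $y$ is not concave.

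The genuine difficulty is that no single sup or inf of affine functions gives convex–concave structure. My plan is instead to use the bilinear form
\[
f(x,y)=\min_{\lambda\in\Delta}\max_{\nu\in\Delta} L(x,y,\lambda,\nu)
\]
with $L$ affine in $(x,y)$ and bilinear in the multipliers. Concretely, I would take
\[
f(x,y)=\max_{\nu\in\Delta}\min_{\lambda\in\Delta}\sum_{i,j}\lambda_j\nu_i\Big[f_j+\langle g^x_j,x-x_j\rangle+\langle g^y_i,y-y_i\rangle + \langle g^y_i, y_i-y_j\rangle\Big]
\]
and, via von Neumann's minimax theorem, show that it is convex in $x$ (a min over $\lambda$ of affine functions would not be, so the order has to be arranged carefully). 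Failing that, I would use the conjugate-type construction $f(x,y)=\sup_{v}\inf_{u}\{\cdots\}$ restricted to the finite data.

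Once $f$ is built, the remaining checks are these.
\begin{itemize}
\item $f(x_k,y_k)=f_k$: the upper and lower bounds both reduce to \eqref{eq5} applied with $i=k$ or $j=k$.
\item The supporting inequalities $f(x,y_k)\geqslant f_k+\langle g^x_k,x-x_k\rangle$ and $f(x_k,y)\leqslant f_k+\langle g^y_k,y-y_k\rangle$ hold, which give the required subgradient memberships.
\item $f$ is $M$-Lipschitz, because every partial slope is some $g^x_j$ or $g^y_i$ of norm at most $M$ by \eqref{eq6}.
\end{itemize}

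The main obstacle is choosing the interpolant so that it is simultaneously convex in $x$, concave in $y$, and tight at the data points. I expect to resolve this with a saddle function of the form
\[
f(x,y)=\min_{j}\max_{i}\big[\ldots\big]
\]
built from partial conjugates, i.e.\ the upper and lower saddle extensions in the sense of Rockafellar, and then to check that \eqref{eq5} is exactly what makes these extensions agree at the points $(x_k,y_k)$.
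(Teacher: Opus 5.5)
Your necessity argument is correct and matches the paper's. The gap is in sufficiency: you never produce a valid interpolant. You rightly discard your first four candidates, but the one you commit to also fails. Your bracket simplifies to $f_j+\langle g^x_j,x-x_j\rangle+\langle g^y_i,y-y_j\rangle$, and $\nu$ enters only through $w=\sum_i\nu_i g^y_i$. The objective is linear in $\lambda$, so the inner minimum is attained at a vertex, and your formula collapses to
\[
f(x,y)=\max_{w\in\operatorname{conv}\{g^y_i\}_{i\in I}}\ \min_{j\in I}\bigl[f_j+\langle g^x_j,x-x_j\rangle+\langle w,y-y_j\rangle\bigr].
\]
For fixed $y$, the inner minimum is jointly concave in $(w,x)$, so maximizing over $w$ gives a function concave in $x$. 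For fixed $x$, it is a supremum over $w$ of affine functions of $y$, hence convex in $y$. This is the wrong saddle orientation. Von Neumann's theorem cannot repair it, because exchanging $\max_\nu$ and $\min_\lambda$ does not change the function.

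A concrete failure: take trivial $y$-data ($y_i=0$, $g^y_i=0$) and the samples $x_1=0$, $x_2=1$, $f_1=0$, $f_2=\tfrac12$, $g^x_1=0$, $g^x_2=1$ of $x^2/2$, which satisfy \eqref{eq5}--\eqref{eq6} with $M=1$. Your candidate is $\min(0,x-\tfrac12)$, which is concave and equals $0\neq f_2$ at $x_2$. The closing appeal to Rockafellar's upper and lower saddle extensions points at the right kind of object but gives no formula, so nothing there can be checked.

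The missing idea is to let a convex-hull variable play the role of the slope in the variable where you need convexity, and to maximize over it, while the discrete index carries all the remaining data. The paper uses
\[
f(x,y)=\sup_{z\in\mathcal Z}\ \min_{i\in I}\bigl[f_i+\langle z,x-x_i\rangle+\langle g^y_i,y-y_i\rangle\bigr],\qquad \mathcal Z=\operatorname{conv}\{g^x_i\}_{i\in I}.
\]
For fixed $z$ every piece has the same $x$-slope $z$, so the inner minimum is affine in $x$ and the supremum is convex in $x$. The inner minimum is jointly concave in $(z,y)$, so its partial supremum over the convex set $\mathcal Z$ is concave in $y$. The verification then goes as follows:
\begin{itemize}
\item Choosing $i=k$ in the minimum gives $f(x_k,y)\leqslant f_k+\langle g^y_k,y-y_k\rangle$.
\item Choosing $z=g^x_k$ and applying \eqref{eq5} with $j=k$ gives $f(x,y_k)\geqslant f_k+\langle g^x_k,x-x_k\rangle$.
\item Together these yield $f(x_k,y_k)=f_k$ and both subgradient inclusions.
\item Since $\|z\|_2\leqslant M$ on $\mathcal Z$, \eqref{eq6} gives the Lipschitz bound.
\end{itemize}
Your simplex device can also be rescued by swapping which player maximizes. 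Taking $\min_\nu\max_\lambda$ of your bracket gives $\inf_{w\in\operatorname{conv}\{g^y_i\}}\max_{j}\bigl[f_j+\langle g^x_j,x-x_j\rangle+\langle w,y-y_j\rangle\bigr]$, which is convex--concave and interpolates by the mirror-image argument.
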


\begin{proof}
Necessity. Suppose $f \in \mathcal S^{Lip}_M$. From the definition of the subdifferentials $\partial_x f(x,y)$ and $\partial_y f(x,y)$ it follows that
\begin{align*}
	f_i + \langle g^y_i,y_j - y_i\rangle \geqslant f(x_i,y_j) \geqslant f_j + \langle g^x_j,x_i - x_j \rangle,
\end{align*}
which is inequality~\eqref{eq5}. Inequalities~\eqref{eq6} are a trivial consequence of $f$ being $M$-Lipschitz. \par

Sufficiency. Suppose that inequalities \eqref{eq5} and \eqref{eq6} are satisfied. Consider a function $f(x,y)$ which is defined as follows:
	\begin{equation}
		f(x,y) = \sup_{z \in \mathcal Z} \und{\min}{i \in I} \hspace{2pt} f_i + \langle z,x-x_i\rangle + \langle g^y_i,y-y_i\rangle,
	\end{equation}
	where the convex set $\mathcal Z \subset \sX$ is defined as follows
	\begin{equation}
		\mathcal Z = \conv{\bigcup_{i=1}^n \{g^x_i\}}.
	\end{equation}

	First, it is easy to observe that function $f(x,y)$ is convex in $x$.
	Indeed for fixed $y$, function $f(x,y)$ is a pointwise supremum of functions that are linear in $x$:
	\begin{equation*}
		f(x,y) = \sup_{z \in \mathcal Z} \left[\langle z, x \rangle + \varphi_y(z)\right],
	\end{equation*}
	where function $\varphi_y(z)$ is defined as follows:
	\begin{equation}
		\varphi_y(z) = \und{\min}{i \in I}\hspace{2pt} f_i - \langle z,x_i \rangle + \langle g^y_i, y-y_i\rangle.
	\end{equation}

	Next, one can show that the function $-f(x,y)$ is convex in $y$.
	Indeed,
	\begin{equation}
		-f(x,y) = \inf_{z \in \mathcal Z} \varphi_{x}(z,y),
	\end{equation}
	where function $\varphi_{x}(z,y)$ is defined as follows:
	\begin{equation}
		\varphi_{x}(z,y) = \und{\max}{i \in I} -\left[f_i + \langle z,x-x_i\rangle + \langle g^y_i, y-y_i \rangle\right].
	\end{equation}
	Note, that function $\varphi_{x}(z,y)$ is convex in $(z,y)$, which implies the convexity of the function $-f(x,y)$ in $y$.

	Now, we show that $f(x_k,y_k) = f_k$. On one hand, we get
	\begin{align*}
		f(x_k,y_k) & = \sup_{z \in \mathcal Z} \und{\min}{i \in I} \hspace{2pt} f_i + \langle z, x_k-x_i \rangle + \langle g^y_i,y_k-y_i\rangle
		\\&\leqslant
		\sup_{z \in \mathcal Z} f_k + \langle z, x_k-x_k\rangle + \langle g^y_k, y_k-y_k \rangle 
		\\&= f_k.
	\end{align*}
	On the other hand, we get
	\begin{align*}
		f(x_k,y_k) & = \sup_{z \in \mathcal Z} \und{\min}{i \in I} f_i + \langle z, x_k-x_i \rangle + \langle g^y_i, y_k-y_i \rangle
		\\&\geqslant
		\und{\min}{i \in I} f_i + \langle g^x_k, x_k-x_i \rangle + \langle g^y_i, y_k-y_i \rangle
		\\&\geqslant \und{\min}{i \in I} f_k
		\\&= f_k,
	\end{align*}
	where we used \eqref{eq5} in the last inequality.
	Hence, $f(x_k,y_k) = f_k$.

	Next, we show that $g^x_k \in \partial_x f(x_k,y_k)$. Indeed,
\begin{align*}
		f(x,y_k) - f(x_k,y_k) - \langle g^x_k, x-x_k\rangle 
		 & =
		\sup_{z \in \mathcal Z} \und{\min}{i \in I} f_i + \langle z, x-x_i \rangle + \langle g^y_i, y_k-y_i \rangle \\&\quad - f_k - \langle g^x_k, x-x_k \rangle
		\\&\geqslant
		\und{\min}{i \in I} f_i + \langle g^x_k, x-x_i \rangle + \langle g^y_i, y_k-y_i \rangle \\&\quad - f_k - \langle g^x_k, x-x_k \rangle
		\\&=
		\und{\min}{i \in I} f_i  + \langle g^y_i, y_k-y_i\rangle - \left(f_k + \langle g^x_k, x_i-x_k \rangle \right)
		\\&\geqslant 0.
\end{align*}
Hence, $g^x_k \in \partial_x f(x_k,y_k)$ by the definition of the subdifferential.

Next, we show that $g^y_k \in \partial_y f(x_k,y_k)$. Indeed,
\begin{align*}
		f(x_k,y) - f(x_k,y_k) - \langle g^y_k, y-y_k \rangle
		 & =
		\sup_{z \in \mathcal Z} \und{\min}{i \in I} f_i + \langle z, x_k-x_i \rangle  + \langle g^y_i, y-y_i \rangle \\&\quad - f_k - \langle g^y_k, y-y_k \rangle
		\\&\leqslant
		\sup_{z \in \mathcal Z} f_k + \langle z, x_k-x_k \rangle + \langle g^y_k, y-y_k \rangle \\&\quad - f_k - \langle g^y_k, y-y_k \rangle
		\\&= 0.
\end{align*}
	Hence, $g^y_k \in \partial_y f(x_k,y_k)$ by the definition of the subdifferential.

	Finally, we show that function $f(x,y)$ satisfies Definition~\ref{fed:lip}. Let $(x_1,y_1), (x_2,y_2) \in \sX \times \sY$. Then, we get
	\begin{align*}
		f(x_1,y_1) - f(x_2,y_2)
		 & = \sup_{z_1 \in \mathcal Z} \und{\min}{i_1 \in I} f_{i_1} + \langle z_1, x_1-x_{i_1} \rangle + \langle g^y_{i_1}, y_1-y_{i_1} \rangle \\
		 &\quad - \sup_{z_2 \in \mathcal Z} \und{\min}{i_2 \in I} f_{i_2} + \langle z_2, x_2-x_{i_2} \rangle + \langle g^y_{i_2}, y_2-y_{i_2} \rangle
		\\&=
		\sup_{z_1 \in \mathcal Z}\inf_{z_2 \in \mathcal Z}\und{\min}{i_1 \in I}\hspace{4pt}\und{\max}{i_2 \in I}
		\left[
		f^{i_1} + \langle z_1, x_1-x_{i_1} \rangle + \langle g^y_{i_1}, y_1-y_{i_1} \rangle \right.
		\\&\left.\quad - f_{i_2} - \langle z_2, x_2-x_{i_2} \rangle - \langle g^y_{i_2}, y_2-y_{i_2} \rangle \right]
		\\&=
		\sup_{z_1 \in \mathcal Z}\inf_{z_2 \in \mathcal Z}\und{\max}{i_2 \in I}\hspace{4pt}\und{\min}{i_1 \in I}
		\left[
		f_{i_1} + \langle z_1, x_1-x_{i_1} \rangle + \langle g^y_{i_1}, y_1-y_{i_1}\rangle \right.
		\\&\left.\quad  - f_{i_2} - \langle z_2, x_2-x_{i_2} \rangle - \langle g^y_{i_2}, y_2-y_{i_2}\rangle \right]
		\\&\leqslant
		\sup_{z_1 \in \mathcal Z}\und{\max}{i_2 \in I}
		\left[
		f_{i_2} + \langle z_1, x_1-x_{i_2} \rangle + \langle g^y_{i_2}, y_1-y_{i_2} \rangle \right.
		\\&\left.\quad - f_{i_2} - \langle z_1, x_2-x_{i_2} \rangle - \langle g^y_{i_2}, y_2-y_{i_2}\rangle \right]
		\\&=
		\sup_{z_1 \in \mathcal Z}\und{\max}{i_2 \in I}
		\langle z_1, x_1-x_2\rangle + \langle g^y_{i_2}, y_1-y_2 \rangle
		\\&\leqslant
		\sup_{z_1 \in \mathcal Z}\und{\max}{i_2 \in I}
		\|z_1\|_2 \|x_1 - x_2\|_2 + \|g^y_{i_2}\|_2 \|y_1 - y_2\|_2
		\\&\leqslant
		M\left(\|x_1 - x_2\|_2 + \|y_1 - y_2\|_2\right),
	\end{align*}
	where we used \eqref{eq6} in the last inequality.
\end{proof}

\subsection{Difference of smooth convex functions}

Next, we solve the interpolation problem for several special cases of $\mathcal S_{\mu_x,\mu_y,L_x,L_y,L_{xy}}$. By letting $L_{xy} = 0$, one recovers difference of two smooth strongly convex functions:
\begin{align}
f(x,y) = p(x) - q(y),
\label{eq:separable}
\end{align}
where $p \in \mathcal F_{\mu_x, L_x}$ and $q \in \mathcal F_{\mu_y, L_y}$. 

We will require the interpolation conditions for smooth strongly convex functions that constitute the composites of $f$.

\begin{theorem} [$\mathcal F_{\mu, L}$-interpolability~\cite{taylor2017smooth}] \label{teo:SC_int} Sequence $\left\{(x_i, g_i, f_i)\right\}_{i \in I}$ is $\mathcal F_{\mu, L}$-interpolable if and only if the following inequality holds for all $i,j \in I$:
\begin{align}
    f_i &\geqslant f_j + \langle g_j,x_i-x_j \rangle \notag \\& \quad + \frac{L}{2(L-\mu)} \left(\frac{1}{L}\|g_i-g_j\|_2^2 + \mu \|x_i-x_j\|_2^2 - 2\frac{\mu}{L}\langle g_i-g_j, x_i-x_j \rangle\right).
\label{eq7}
\end{align}
\end{theorem}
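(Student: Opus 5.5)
The plan is to reduce to the smooth convex case, $\mu = 0$, using the standard correspondences of Taylor et al. First, the $\mathcal F_{0,L}$ interpolation conditions
\[
f_i \geqslant f_j + \langle g_j, x_i - x_j\rangle + \tfrac{1}{2L}\|g_i-g_j\|_2^2
\]
are handled by conjugation. A closed proper convex $f$ is $L$-smooth if and only if $f^*$ is $\tfrac1L$-strongly convex. Under $f \leftrightarrow f^*$, the data $(x_i,g_i,f_i)$ becomes $(g_i,x_i,\langle g_i,x_i\rangle - f_i)$, because $g\in\partial f(x)$ holds exactly when $x\in\partial f^*(g)$ with equality in Fenchel--Young. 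It therefore suffices to interpolate with a $\tfrac1L$-strongly convex (possibly nonsmooth, extended-valued) function.

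\emph{Strongly convex interpolation.} A sequence $\{(x_i,g_i,f_i)\}$ is interpolable by a $\mu$-strongly convex function if and only if
\[
f_i \geqslant f_j + \langle g_j,x_i-x_j\rangle + \tfrac{\mu}{2}\|x_i-x_j\|_2^2 .
\]
This follows from the nonsmooth convex case, which is the one-variable analogue of the max-of-affine construction used in Theorem~\ref{teo:Lipschitz}. Apply that case to the shifted data $h_i = f_i - \tfrac\mu2\|x_i\|^2$, $s_i = g_i - \mu x_i$, and then add $\tfrac\mu2\|x\|^2$ back. Translated to the conjugate data, this condition reads exactly as the $\tfrac{1}{2L}\|g_i-g_j\|^2$ inequality, which settles the case $\mu=0$. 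Necessity is the same computation read backwards, since $f^*$ of an $\mathcal F_{0,L}$ function is $\tfrac1L$-strongly convex.

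\emph{General $\mu>0$.} The key fact is that $f\in\mathcal F_{\mu,L}$ if and only if $h = f - \tfrac\mu2\|\cdot\|^2 \in \mathcal F_{0,L-\mu}$. Convexity of $h$ is the definition. For the smoothness constant, I would argue through the conjugate: $h^*$ is $\tfrac1{L-\mu}$-strongly convex precisely when $f$ is $L$-smooth. This is the step needing care, and I would use the known equivalence that for convex $f$, $L$-Lipschitz gradients are equivalent to cocoercivity.

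Transforming the data as $h_i = f_i - \tfrac\mu2\|x_i\|^2$ and $s_i = g_i-\mu x_i$, apply the $\mu=0$ condition with constant $L-\mu$:
\[
h_i \geqslant h_j + \langle s_j,x_i-x_j\rangle + \tfrac{1}{2(L-\mu)}\|s_i-s_j\|^2 .
\]
Expand using $\|s_i-s_j\|^2 = \|g_i-g_j\|^2 - 2\mu\langle g_i-g_j,x_i-x_j\rangle + \mu^2\|x_i-x_j\|^2$. The quadratic $\tfrac\mu2$ terms combine into $\tfrac\mu2\|x_i-x_j\|^2$ plus cross terms, and routine algebra yields \eqref{eq7}, with coefficient $\tfrac{L}{2(L-\mu)}$ multiplying $\tfrac1L\|g_i-g_j\|^2 + \mu\|x_i-x_j\|^2 - \tfrac{2\mu}{L}\langle\cdot,\cdot\rangle$. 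The case $\mu = L$ is excluded, or else read as a limit.

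\emph{Main obstacle.} The delicate point is that the conjugate interpolant may be extended-valued. Its conjugate must nonetheless be finite everywhere and genuinely $L$-smooth. Strong convexity of the interpolant guarantees that its conjugate is finite and differentiable with $\tfrac{1}{\mu}$-Lipschitz gradient, so the interpolant constructed in the strongly convex step should be taken as a strongly convex closed proper function, and its conjugate then provides the required $\mathcal F_{0,L}$ function.
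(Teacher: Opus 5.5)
The paper gives no proof of this statement and imports it from \cite{taylor2017smooth}. Your proposal is correct and is essentially the argument of that source: max-of-affine convex interpolation, a quadratic shift for strong convexity, conjugation to reach $\mathcal F_{0,L}$, and the minimal-curvature subtraction $f\mapsto f-\frac{\mu}{2}\|\cdot\|_2^2$ reducing $\mathcal F_{\mu,L}$ to $\mathcal F_{0,L-\mu}$. Your algebra producing the factor $\frac{L}{2(L-\mu)}$ checks out.

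For the step you flag as needing care, the cleanest justification is that a convex differentiable function on $\R^d$ has $L$-Lipschitz gradient if and only if $\frac{L}{2}\|\cdot\|_2^2-f$ is convex. This transfers verbatim to $h$ with constant $L-\mu$, whereas pointwise cocoercivity of $\nabla f$ together with monotonicity of $\nabla h$ would not suffice on their own. Your extended-valuedness worry is also moot, since your max-of-affine-plus-quadratic interpolant is finite everywhere.
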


\begin{lemma} \label{lem:sep_int}
Sequence $A_I$ is $\mathcal S_{\mu_x,\mu_y,L_x,L_y,0}$-interpolable if and only if there exist $\{p_i\}_{i\in I}: \ p_i \in \R, \ \forall i \in I$ such that the following inequalities are satisfied for all $i,j \in I$:
\begin{align*}
    p_i - p_j &\geqslant \langle g^x_j, x_i - x_j \rangle   \\
    &\quad+ \frac{L_x}{2(L_x-\mu_x)}\left(\frac{1}{L_x}\|g^x_i-g^x_j\|_2^2 - \frac{2\mu_x}{L_x}\langle g^x_i-g^x_j, x_i-x_j \rangle + \mu_x\|x_i-x_j\|_2^2\right), \\
    p_i - p_j &\geqslant f_i - f_j - \langle g^y_j, y_i - y_j \rangle  \\
    &\quad + \frac{L_y}{2(L_y-\mu_y)}\left(\frac{1}{L_y}\|g^y_i-g^y_j\|_2^2 + \frac{2\mu_y}{L_y}\langle g^y_i-g^y_j, y_i-y_j \rangle + \mu_y\|y_i-y_j\|_2^2\right).
\end{align*}
\end{lemma}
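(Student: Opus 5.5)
The plan is to reduce everything to Theorem~\ref{teo:SC_int}, applied separately to the two components. The key structural fact is that a function in $\mathcal S_{\mu_x,\mu_y,L_x,L_y,0}$ splits as $f(x,y)=p(x)-q(y)$ with $p\in\mathcal F_{\mu_x,L_x}$ and $q\in\mathcal F_{\mu_y,L_y}$. Indeed, $L_{xy}=0$ makes $\nabla_x f$ independent of $y$ and $\nabla_y f$ independent of $x$. Hence $\partial^2 f/\partial x\partial y=0$ in the distributional sense, or more elementarily $f(x,y)-f(x,y_0)-f(x_0,y)+f(x_0,y_0)=0$, which one sees by integrating $\nabla_x f$ along a segment in $x$. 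So $f(x,y)=p(x)-q(y)$ with $p(x)=f(x,y_0)$ and $q(y)=f(x_0,y_0)-f(x_0,y)$. The constants are absorbed into $p$ and $q$, and their smoothness and strong convexity are read directly from the definition.

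\emph{Necessity.} Suppose $A_I$ is interpolated by such an $f=p-q$. Set $p_i=p(x_i)$ and $q_i=q(y_i)=p_i-f_i$. Then $g^x_i=\nabla p(x_i)$ and $g^y_i=-\nabla q(y_i)$, so $\{(x_i,g^x_i,p_i)\}$ is $\mathcal F_{\mu_x,L_x}$-interpolable and $\{(y_i,-g^y_i,p_i-f_i)\}$ is $\mathcal F_{\mu_y,L_y}$-interpolable. Applying Theorem~\ref{teo:SC_int} to the first sequence gives exactly the first inequality. Applying it to the second gives
\[
p_i-f_i-p_j+f_j\geqslant -\langle g^y_j,y_i-y_j\rangle+\tfrac{L_y}{2(L_y-\mu_y)}\Bigl(\tfrac{1}{L_y}\|g^y_i-g^y_j\|_2^2+\tfrac{2\mu_y}{L_y}\langle g^y_i-g^y_j,y_i-y_j\rangle+\mu_y\|y_i-y_j\|_2^2\Bigr),
\]
where the sign of the cross term flips because $g$ is replaced by $-g^y$. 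Rearranging yields the second inequality.

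\emph{Sufficiency.} Given the $p_i$, the two inequalities say, again via Theorem~\ref{teo:SC_int}, that the two sequences above are interpolable. This produces $p\in\mathcal F_{\mu_x,L_x}$ and $q\in\mathcal F_{\mu_y,L_y}$ with the prescribed values and gradients. We then set $f(x,y)=p(x)-q(y)$. It satisfies $f(x_i,y_i)=p_i-(p_i-f_i)=f_i$, $\nabla_x f=g^x_i$ and $\nabla_y f=g^y_i$. It is also $\mu_x$-strongly convex in $x$, $\mu_y$-strongly concave in $y$, and has gradient Lipschitz constants $L_x$, $L_y$, with zero cross constant, so $f\in\mathcal S_{\mu_x,\mu_y,L_x,L_y,0}$.

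The only nonroutine step is the separability argument in the necessity direction, i.e.\ justifying $f=p-q$ from $L_{xy}=0$. Since $f$ is differentiable with $\nabla_x f(x,y)=\nabla_x f(x,y')$, the function $x\mapsto f(x,y)-f(x,y')$ has zero gradient and is therefore constant. This constant depends only on $(y,y')$, which gives the split. The rest is sign bookkeeping in the $y$-inequality, together with the degenerate case $\mu=L$, which is handled by the usual limiting convention of Theorem~\ref{teo:SC_int}.
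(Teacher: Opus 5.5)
Your proof is correct and follows the paper's argument. In both directions you apply Theorem~\ref{teo:SC_int} to $\{(x_i,g^x_i,p_i)\}_{i\in I}$ and $\{(y_i,-g^y_i,p_i-f_i)\}_{i\in I}$, and you take $f=p-q$ as the interpolant. The one addition is your explicit proof that $L_{xy}=0$ forces the split $f(x,y)=p(x)-q(y)$, via $x\mapsto f(x,y)-f(x,y')$ having zero gradient; the paper takes this for granted from the remark preceding the lemma.
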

\begin{proof}
Necessity. Suppose $A_I$ can be interpolated by $p(x) - q(y)$. The inequalities follow from Theorem~\ref{teo:SC_int} where we additionally denoted $p_i := p(x_i)$. Sufficiency. Suppose $A_I$ satisfies the inequalities. By Theorem~\ref{teo:SC_int},  sequences $\left\{(x_i, g^x_i, p_i)\right\}_{i \in I}$ and $\left\{(y_i, -g^y_i,\ p_i - f_i)\right\}_{i \in I}$ are interpolable by the respective smooth strongly convex functions. Their difference interpolates $A_I$ which completes the proof.
\end{proof}

Because we treat interpolation of the special cases in unified manner, our next goal is to express the result of Lemma~\ref{lem:sep_int} in terms of the elements of $A_I$. First, we combine the inequalities from Lemma~\ref{lem:sep_int} to obtain
\begin{align}
    p_i - p_j \geqslant c^{ij} \quad \text{for} \quad  i,j \in I, 
\label{eq9}
\end{align}
where
\begin{align*}
    c^{ij} &= \max \left\{\alpha^{ij}, \beta^{ij} \right\}, \\
    \alpha^{ij} &= \langle g^x_j, x_i - x_j \rangle  \\&\quad+ \frac{L_x}{2(L_x-\mu_x)}\left(\frac{1}{L_x}\|g^x_i-g^x_j\|_2^2 - \frac{2\mu_x}{L_x}\langle g^x_i-g^x_j, x_i-x_j \rangle + \mu_x\|x_i-x_j\|_2^2\right), \\
    \beta^{ij} &= f_i - f_j - \langle g^y_j, y_i - y_j \rangle \\&\quad+ \frac{L_y}{2(L_y-\mu_y)}\left(\frac{1}{L_y}\|g^y_i-g^y_j\|_2^2 + \frac{2\mu_y}{L_y} \langle g^y_i-g^y_j, y_i-y_j \rangle + \mu_y\|y_i-y_j\|_2^2\right).
\end{align*}

Next, we derive necessary and sufficient conditions for feasibility of \ref{eq9}.

\begin{definition} $J := (j_0, \ldots , j_k)$ is an ordered selection of indices from $I$. Indices do not repeat except for $j_k$ that can can equal $j_0$.
\end{definition}

\begin{definition} \label{fed:index_select} $J(n)$ is a set of all $J$ such that $j_0 = 0$ and $j_k = n$, where $0, n \in I$.
\end{definition}

\begin{definition} \label{fed:index_sum}  Let $c^{ij} \in \R$ be real numbers indexed by $i,j \in I$ and $J \in J(n)$. Then $C_{J} := \overset{k-1}{\underset{l=0}{\sum}} c^{j_{l+1}j_{l}}$ is a sum of $c^{ij}$ corresponding to $J$.
\end{definition}

\begin{definition} \label{fed:maximal_sum} $C_{J(n)} := \und{\max}{J \in J(n)} C_{J}$ is the maximal sum.
\end{definition}

\begin{definition} \label{fed:cyclic_sum} The \textit{cyclic sum} of $c^{ij}$ that corresponds to $J$ is $    \overset{k}{\underset{l=0}{\sum}} c^{j_{l+1}j_{l}}$, where $j_{k+1} := j_0$.
\end{definition}

The following Lemma~\ref{lem:lin_ineq} was inspired by the concept of cyclic monotonicity introduced in classical works of R. Tyrrell Rockafellar~\cite{rockafellar1966characterization} and could be considered a variation of Theorem 3.2 from \cite{artstein2022rockafellar}.

\begin{lemma} A system of linear inequalities~\eqref{eq9} is feasible (with respect to $p \in \R^{|I|}$) if and only if all cyclic sums of $c^{ij}$ (Definition ~\ref{fed:cyclic_sum}) are non-positive.
\label{lem:lin_ineq}
\end{lemma}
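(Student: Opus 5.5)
This is a potential-feasibility (shortest-path) statement, and I would prove both directions by viewing the system $p_i - p_j \geqslant c^{ij}$ on the complete directed graph with vertex set $I$.

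\emph{Necessity.} Assume $p$ is feasible and take any cyclic selection $J=(j_0,\ldots,j_k)$ with $j_{k+1}:=j_0$. Summing the inequalities $p_{j_{l+1}} - p_{j_l} \geqslant c^{j_{l+1}j_l}$ for $l=0,\ldots,k$ makes the left-hand side telescope to $p_{j_0}-p_{j_0}=0$. Hence the cyclic sum is at most $0$. This direction is immediate.

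\emph{Sufficiency.} Assume every cyclic sum is non-positive. I would build $p$ explicitly as a longest-path potential from the distinguished index $0$: set $p_n := C_{J(n)}$, the maximal sum from Definition~\ref{fed:maximal_sum}, and $p_0:=0$. This is a maximum over a finite set, because selections have no repeated indices and $I$ is finite (as in the PEP setting). The value is also consistent at $n=0$. The selection $(0,0)$ gives $c^{00}$, and since $\alpha^{00}=0$ and $\beta^{00}=0$ we get $c^{00}=0$. Any longer selection from $0$ back to $0$ is a cycle, so its sum is non-positive. Therefore $C_{J(0)}=0$.

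It remains to verify $p_i \geqslant p_j + c^{ij}$. Take a maximizing selection $J^*\in J(j)$, so $C_{J^*}=p_j$, and split into two cases.
\begin{itemize}
\item If $i$ does not occur in $J^*$, appending $i$ gives a valid selection in $J(i)$ with sum $p_j + c^{ij}$. Hence $p_i \geqslant p_j + c^{ij}$.
\item If $i$ does occur in $J^*$, say $i=j_m$, split $J^*$ into the prefix $(j_0,\ldots,j_m)\in J(i)$ and the remainder $(j_m,\ldots,j_k=j)$. Closing the remainder with the edge $j\to i$ gives a cycle through $i$, whose sum is non-positive by assumption, so
\[
\sum_{l=m}^{k-1} c^{j_{l+1}j_l} + c^{ij} \leqslant 0 .
\]
Then
\[
p_j + c^{ij} = \sum_{l=0}^{m-1} c^{j_{l+1}j_l} + \Bigl(\sum_{l=m}^{k-1} c^{j_{l+1}j_l} + c^{ij}\Bigr) \leqslant \sum_{l=0}^{m-1} c^{j_{l+1}j_l} \leqslant p_i .
\]
The special cases $i=0$ and $i=j$ fall under this same argument; for $i=j$ it uses $c^{jj}=0$.
\end{itemize}

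The main obstacle is bookkeeping rather than ideas. The definition of $J$ forbids repeated indices except for a closing repeat, so each prefix and each closed remainder must be checked to be an admissible selection (or cycle). The cases $i=0$, where the cycle passes through the root, and $i=j$ also need explicit care. Alternatively, I could invoke Farkas' lemma: the dual of the system is a nonnegative circulation, which decomposes into simple cycles. I prefer the constructive route because it also produces explicit values $p_n = C_{J(n)}$ that the paper can reuse later.
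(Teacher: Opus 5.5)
Your proof is essentially the paper's: necessity by telescoping the inequalities around a cycle, and sufficiency via the longest-path potential $p_n = C_{J(n)}$ with the same two cases (append $i$ to a maximizing selection, or split off the closed loop through $i$ and use the non-positivity of its cyclic sum). One small bookkeeping point, which the paper shares: when $j=0$ and the maximizer in $J(0)$ is a closed selection such as $(0,0)$, appending $i$ does not give an admissible selection; since $p_0=0$, you can instead use $(0,i)\in J(i)$ to get $p_i \geqslant c^{i0} = p_0 + c^{i0}$ directly.
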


\begin{proof}
Suppose $p \in \R^{|I|}$ satisfies the system of inequalities. Let $J = (j_0,...,j_{k})$ be an arbitrary ordered subset of $I$. By adding the following inequalities
\begin{align*}
    & p_{j_1} - p_{j_0} \geqslant c^{j_1j_0}, \\
    & p_{j_2} - p_{j_1} \geqslant c^{j_2j_1}, \\
    &... \\
    & p_{j_k} - p_{j_{k-1}} \geqslant c^{j_{k}j_{k-1}}, \\
    & p_{j_0} - p_{j_k} \geqslant c^{j_0j_k},
\end{align*}
we prove that the cyclic sum of $c^{ij}$ corresponding to $J$ is non-positive. 

Suppose all cyclic sums of $c^{ij}$ are non-positive. We will show that $p_i := C_{J(i)}$ $i \in I$, where $C_{J(i)}$ is a maximal sum (Definition ~\ref{fed:maximal_sum}), satisfy \eqref{eq9}: 
\begin{equation}
\und{\max}{J \in J(i)} C_{J} \geqslant \und{\max}{J \in J(j)} C_{J} + c^{ij}.
\label{eq:cyclic_sums}
\end{equation}
The inequalities obviously hold for $i=j$, so consider $i \neq j$. There are two possible cases.

First case: $\overline{J}_j = \und{\text{argmax}}{\overline{J} \in J(j)} \hspace{2pt} C_{\overline{J}}$ \ does not contain $i$. Then $(\overline{J}_j, i) \in J(i)$ and the sum of $c^{ij}$ corresponding to $(\overline{J}_j, i)$ is exactly $\und{\max}{J \in J(j)} C_{J} + c^{ij}$. This sum however is not bigger than $\und{\max}{J \in J(i)} C_{J}$ that is the maximal sum for $J(i)$.

Second case: $\overline{J}_j = \und{\text{argmax}}{\overline{J} \in J(j)} \hspace{2pt} C_{\overline{J}}$ \quad contains $i$. From Definition~\ref{fed:index_select} we have that 
$\overline{J}_j \in J(j)$ can contain $i$ only once:
\begin{align*}
    \overline{J}_j = (j_0 = 0, ..., \mathbf{j}_m = \mathbf{i}, ..., j_k = j)
\end{align*}
The sum of $c^{ij}$ (Definition~\ref{fed:index_sum}) corresponding to $(\overline{J}_j, i)$ can be written as
\begin{align*}
    C_{(\overline{J}_j, i)} = C_{\overline{J}_{j1}} + C_{\overline{J}_{j2}}.
\end{align*}
where
\begin{align*}
    &\overline{J}_{j1} = (j_0 = 0, ..., j_m = i), \\
    &\overline{J}_{j2} = (j_m = i, ..., j_{k+1} = i).
\end{align*}
By definition, $\overline{J}_{j1} \in J(i)$. And because any cyclic sum of $c^{ij}$ is non-positive, we have $C_{\overline{J}_{j2}} \leqslant 0$. Therefore:
\begin{align*}
    &\und{\max}{J \in J(j)} C_{J} + c^{ij} = C_{(\overline{J}_j, i)} = C_{\overline{J}_{j1}} + C_{\overline{J}_{j2}} \leqslant C_{\overline{J}_{j1}} \leqslant \und{\max}{J \in J(i)} C_{J}.
\end{align*}
which is exactly \eqref{eq:cyclic_sums}.
\end{proof}

With Lemma~\ref{lem:lin_ineq}, we can rewrite the interpolation conditions in the desired form. 

\begin{theorem} \label{teo:sep_int} Let $\forall i,j \in I: \ c^{ij}:= \max \left(\alpha^{ij}, \beta^{ij} \right)$, where
\begin{align*}
    \alpha^{ij} &= \langle g^x_j, x_i - x_j \rangle \\&\quad+ \frac{L_x}{2(L_x-\mu_x)}\left(\frac{1}{L_x}\|g^x_i-g^x_j\|_2^2 - \frac{2\mu_x}{L_x}\langle g^x_i-g^x_j,x_i-x_j \rangle+ \mu_x\|x_i-x_j\|_2^2\right), \\
    \beta^{ij} &= f_i - f_j - \langle g^y_j, y_i - y_j \rangle \\&\quad+ \frac{L_y}{2(L_y-\mu_y)}\left(\frac{1}{L_y}\|g^y_i-g^y_j\|_2^2 + \frac{2\mu_y}{L_y} \langle g^y_i-g^y_j, y_i-y_j \rangle + \mu_y\|y_i-y_j\|_2^2\right).
\end{align*}
Sequence $A_I$ is $\mathcal S_{\mu_x,\mu_y,L_x,L_y,0}$-interpolable if and only if any cyclic sum (Definition ~\ref{fed:cyclic_sum}) of $c^{ij}$ is non-positive.
\end{theorem}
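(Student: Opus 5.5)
The plan is to chain Lemma~\ref{lem:sep_int} with Lemma~\ref{lem:lin_ineq}; the theorem is essentially their composition. Lemma~\ref{lem:sep_int} says that $A_I$ is $\mathcal S_{\mu_x,\mu_y,L_x,L_y,0}$-interpolable if and only if there are reals $\{p_i\}_{i\in I}$ satisfying, for every ordered pair $(i,j)$, the two inequalities $p_i - p_j \geqslant \alpha^{ij}$ and $p_i - p_j \geqslant \beta^{ij}$. Here $\alpha^{ij}$ and $\beta^{ij}$ are exactly the right-hand sides appearing in that lemma; I will check this against the definitions in the statement, including the sign conventions for $g^y$ and the $+\frac{2\mu_y}{L_y}$ term.

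First, I merge the two inequalities. For fixed $(i,j)$, the pair of conditions $p_i - p_j \geqslant \alpha^{ij}$ and $p_i - p_j \geqslant \beta^{ij}$ is equivalent to the single condition $p_i - p_j \geqslant \max\{\alpha^{ij},\beta^{ij}\} = c^{ij}$. So interpolability is equivalent to feasibility, in $p\in\R^{|I|}$, of the linear system \eqref{eq9}. Second, I apply Lemma~\ref{lem:lin_ineq} to the numbers $c^{ij}$. That lemma states that \eqref{eq9} is feasible exactly when every cyclic sum of the $c^{ij}$ (Definition~\ref{fed:cyclic_sum}) is non-positive. Combining the two equivalences gives the claim in both directions at once.

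The main point needing care is the hypothesis of Lemma~\ref{lem:lin_ineq}. Its sufficiency proof uses $p_i := C_{J(i)}$, a maximum over paths starting from a fixed root index $0$, so I must make sure $J(i)$ is nonempty for every $i$. This holds because the direct path $(0,i)$ always belongs to $J(i)$, and for $i = 0$ the trivial selection belongs to $J(0)$. The maximum is then over finitely many paths, since indices do not repeat and $I$ is finite as in the PEP setting. I would state finiteness of $I$ explicitly.

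A further check is the diagonal terms. The cycles $(i,i)$ give $c^{ii} = \max\{0,0\}=0$, so they impose no extra condition, and the case $i=j$ in \eqref{eq9} holds trivially. With these verified, the proof reduces to the two cited lemmas and needs no new computation.
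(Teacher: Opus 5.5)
Your proposal is correct and follows the paper's route exactly: you merge the two inequalities of Lemma~\ref{lem:sep_int} into the single condition $p_i - p_j \geqslant c^{ij}$ via the maximum, which gives system \eqref{eq9}, and then you invoke Lemma~\ref{lem:lin_ineq}. Your extra checks (finiteness of $I$, nonemptiness of $J(i)$, and $c^{ii}=0$) are sensible bookkeeping that the paper leaves implicit.
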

\begin{proof}
Apply Lemma~\ref{lem:lin_ineq} to the system of inequalities \eqref{eq9}.
\end{proof}

\subsection{Bilinear functions}

We begin with the family of matrix game functions
\begin{equation}
    f(x,y) = y^\top A x,
\label{problem:bilinear}
\end{equation}
where $A \in \R^{d_y \times d_x}$ is a coupling matrix with bounded largest singular value $\sigma_{\max}(A) \leqslant L$. We denote $\mathcal L_L = \{A\colon \ \sigma_{\max}(A) \leqslant L \}$ and $f \in \mathcal B_L$.

\begin{definition}[$\mathcal L_L$ - interpolation~\cite{bousselmi2024interpolation}] Sets of pairs $\left\{(x_i, y_i)\right\}_{i\in[N_1]}$ and $\left\{(u_j, v_j)\right\}_{j\in[N_2]}$ are $\mathcal L_L$-interpolable if and only if $\exists A \in \mathcal L_L$ such that
\begin{align*}
    \begin{cases*}
        y_i = Ax_i \quad \forall i \in [N_1], \\
        v_j = A^\top u_j \quad \forall j \in [N_2].
    \end{cases*}
\end{align*}
\label{fed:lin_int}
\end{definition}

\begin{theorem} [$\mathcal L_L$ - interpolation~\cite{bousselmi2024interpolation}]. Let $X \in \R^{n\times N_1}$, $Y \in \R^{m\times N_1}$, $U \in \R^{m\times N_2}$, $V \in \R^{n\times N_2}$, and $0 \leqslant L < \infty$. $(X,Y,U,V)$ is $\mathcal L_L$-interpolable if and only if
\begin{align*}
    \begin{cases*}
        X^\top V = Y^\top U, \\
        Y ^\top Y \preceq L^2 X^\top X, \\
        V ^\top V \preceq L^2 U^\top U.
    \end{cases*}
\end{align*}
\label{teo:lin_int}
\end{theorem}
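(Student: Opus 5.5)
Necessity is immediate. Suppose $A \in \mathcal L_L$ satisfies $y_i = Ax_i$ and $v_j = A^\top u_j$, so $Y = AX$ and $V = A^\top U$.
\begin{itemize}
\item Then $X^\top V = X^\top A^\top U = (AX)^\top U = Y^\top U$.
\item Since $\sigma_{\max}(A) \leqslant L$ means $A^\top A \preceq L^2 I_n$, we get $Y^\top Y = X^\top A^\top A X \preceq L^2 X^\top X$.
\item Symmetrically, $AA^\top \preceq L^2 I_m$ gives $V^\top V = U^\top A A^\top U \preceq L^2 U^\top U$.
\end{itemize}

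For sufficiency I would construct $A$ explicitly. The conditions $Y^\top Y \preceq L^2 X^\top X$ and $V^\top V \preceq L^2 U^\top U$ give linear maps defined on the ranges. Set $A_0 \colon \operatorname{range}(X) \to \R^m$ by $A_0(Xa) = Ya$. This is well defined: if $Xa = 0$, then $\|Ya\|^2 \leqslant L^2 \|Xa\|^2 = 0$. It also satisfies $\|A_0 w\| \leqslant L\|w\|$. Similarly set $B_0 \colon \operatorname{range}(U) \to \R^n$ by $B_0(Ub) = Vb$, again with norm at most $L$. The condition $X^\top V = Y^\top U$ gives the compatibility $\langle A_0 w, z\rangle = \langle w, B_0 z\rangle$ for $w \in \operatorname{range}(X)$ and $z \in \operatorname{range}(U)$.

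The task is then to find $A$ with $\|A\| \leqslant L$, $A|_{\operatorname{range}X} = A_0$ and $A^\top|_{\operatorname{range}U} = B_0$. This is a norm-preserving completion problem. Choose orthonormal bases adapted to $\mathcal X_1 = \operatorname{range}(X)$ with complement $\mathcal X_2$, and to $\mathcal U_1 = \operatorname{range}(U)$ with complement $\mathcal U_2$. Write $A$ in block form
\[
A = \begin{pmatrix} A_{11} & A_{12} \\ A_{21} & A_{22} \end{pmatrix},
\]
mapping $\mathcal X_1 \oplus \mathcal X_2 \to \mathcal U_1 \oplus \mathcal U_2$.
\begin{itemize}
\item The constraint on $\mathcal X_1$ fixes the first block column $\binom{A_{11}}{A_{21}}$, and this column has norm at most $L$.
\item The constraint on $A^\top$ over $\mathcal U_1$ fixes the first block row $(A_{11}\; A_{12})$, and this row has norm at most $L$.
\item The compatibility condition says exactly that the two prescriptions agree on the shared block $A_{11}$.
\end{itemize}
Only $A_{22}$ remains free.

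The main step is Parrott's theorem on contractive completions. It guarantees a choice of $A_{22}$ with $\|A\| = \max\{\|\text{column}\|, \|\text{row}\|\} \leqslant L$. I would cite it, or construct $A_{22}$ directly: take $A_{22} = -K A_{11}^\top J$, where $A_{21} = K(L^2 - A_{11}^\top A_{11})^{1/2}$ and $A_{12} = (L^2 - A_{11}A_{11}^\top)^{1/2} J$ with contractions $K$ and $J$. For $L = 0$ the statement is trivial, since then $Y = 0$ and $V = 0$ and $A = 0$ works. Verifying this completion, that is, the factorizations and the norm bound, is the main obstacle. Everything else is bookkeeping.
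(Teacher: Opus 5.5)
Your proof is correct. The paper does not prove this statement; it takes it from Bousselmi, Hendrickx and Glineur and uses it as a black box in Theorems~\ref{teo:bil_int}, \ref{teo:gen_bilinear} and \ref{teo:bil_coupling_int}. So your argument is a self-contained replacement for a citation, not a variant of something in the text.

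Necessity is the routine computation. The sufficiency reduction is sound:
\begin{itemize}
\item The well-definedness argument for $A_0$ and $B_0$ is Douglas' factorization lemma.
\item The identity $X^\top V = Y^\top U$ is exactly the agreement of the two prescriptions of the compressed block $A_{11}$, because for $w \in \mathcal X_1$, $z \in \mathcal U_1$ the pairings $\langle A_0 w, z\rangle$ and $\langle w, B_0 z\rangle$ see only the $\mathcal U_1$- and $\mathcal X_1$-components.
\item What remains is precisely Parrott's contractive completion problem.
\end{itemize}

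The step you flag as the main obstacle closes in a few lines. For $L>0$ put $T = A_{11}$, $D = (L^2 I - T^\top T)^{1/2}$ and $D_* = (L^2 I - TT^\top)^{1/2}$; these are defined because $\|T\|_2 \leqslant L$. Your column and row bounds read $A_{21}^\top A_{21} \preceq D^2$ and $A_{12}A_{12}^\top \preceq D_*^2$. The same Douglas argument you used for $A_0$ then gives contractions $K$, $J$ with $A_{21} = KD$ and $A_{12} = D_* J$. With your choice $A_{22} = -KT^\top J$ one has
\[
\begin{pmatrix} T & D_* J \\ K D & -K T^\top J \end{pmatrix}
= \begin{pmatrix} I & 0 \\ 0 & K \end{pmatrix}
\begin{pmatrix} T & D_* \\ D & -T^\top \end{pmatrix}
\begin{pmatrix} I & 0 \\ 0 & J \end{pmatrix}.
\]
The middle factor $\mathcal J$ satisfies $\mathcal J^\top \mathcal J = L^2 I$. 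This follows from $T^\top T + D^2 = L^2 I$, $TT^\top + D_*^2 = L^2 I$, and the intertwining relation $TD = D_* T$, which makes the off-diagonal blocks vanish. The outer factors are contractions, so $\|A\|_2 \leqslant L$. By construction $A$ restricts to $A_0$ on $\mathcal X_1$ and $A^\top$ restricts to $B_0$ on $\mathcal U_1$, hence $AX = Y$ and $A^\top U = V$.

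The citation keeps the paper short. Your route buys the following:
\begin{itemize}
\item It uses only elementary finite-dimensional linear algebra.
\item It makes the paper's bilinear and composite interpolation results independent of the external reference.
\item Adding the free contraction term of the Davis--Kahan--Weinberger formula in the $A_{22}$ block describes every interpolating operator, not just one.
\end{itemize}
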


Using the above result, we can derive the interpolation conditions for $\mathcal B_L$.

\begin{theorem} \label{teo:bil_int} Sequence $A_I$ is $\mathcal B_L$-interpolable if and only if
\begin{align*}
    \begin{cases*}
        f_i = \langle g^x_i, x_i \rangle = \langle g^y_i, y_i \rangle \quad \forall i \in I, \\
        G_x^\top X = Y^\top G_y, \\
        G_x ^\top G_x \preceq L^2 Y^\top Y, \\
        G_y ^\top G_y \preceq L^2 X^\top X.
    \end{cases*}
\end{align*}
where $X, Y, G_x, G_y$ are matrices that have $\{x_i\}_{i\in I}$, $\{y_i\}_{i\in I}$, $\{g^x_i\}_{i\in I}$, $\{g^y_i\}_{i\in I}$ as columns respectively.
\end{theorem}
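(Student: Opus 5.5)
The plan is to reduce the statement to Theorem~\ref{teo:lin_int} by identifying what the subgradients of a bilinear function must be. For $f(x,y) = y^\top A x$ the function is differentiable, with $\partial_x f(x,y) = \{A^\top y\}$ and $\partial_y f(x,y) = \{Ax\}$. So $A_I$ is $\mathcal B_L$-interpolable if and only if there is a matrix $A$ with $\sigma_{\max}(A)\leqslant L$ such that, for every $i \in I$,
\begin{equation*}
g^x_i = A^\top y_i, \qquad g^y_i = A x_i, \qquad f_i = y_i^\top A x_i .
\end{equation*}

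\textbf{Step 1 (function values).} Given the first two relations, we have $y_i^\top A x_i = \langle A^\top y_i, x_i\rangle = \langle g^x_i, x_i\rangle$ and also $y_i^\top A x_i = \langle y_i, Ax_i\rangle = \langle g^y_i, y_i\rangle$. Hence, in the presence of the gradient relations, the requirement $f_i = f(x_i,y_i)$ is equivalent to $f_i = \langle g^x_i, x_i\rangle = \langle g^y_i, y_i\rangle$. This gives necessity of the first condition. For sufficiency, it reduces the problem to finding $A \in \mathcal L_L$ that satisfies only the gradient relations.

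\textbf{Step 2 (gradient relations).} In matrix form the gradient relations read $G_y = AX$ and $G_x = A^\top Y$. This is exactly the $\mathcal L_L$-interpolation problem of Definition~\ref{fed:lin_int}, with the following identification in Theorem~\ref{teo:lin_int}:
\begin{itemize}
\item the pairs $(x_i, g^y_i)$, mapped by $A$, so $X \leftarrow X$ and $Y \leftarrow G_y$;
\item the pairs $(y_i, g^x_i)$, mapped by $A^\top$, so $U \leftarrow Y$ and $V \leftarrow G_x$.
\end{itemize}
The dimensions are $n = d_x$, $m = d_y$ and $N_1 = N_2 = |I|$.

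\textbf{Step 3 (substitution).} Substituting into the three conditions of Theorem~\ref{teo:lin_int} gives:
\begin{itemize}
\item $X^\top G_x = G_y^\top Y$, which after transposition is $G_x^\top X = Y^\top G_y$;
\item $G_y^\top G_y \preceq L^2 X^\top X$;
\item $G_x^\top G_x \preceq L^2 Y^\top Y$.
\end{itemize}
These are precisely the remaining three conditions of the statement. Combining Steps 1–3 yields both directions of the equivalence.

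The only point needing care is the bookkeeping in Step 2. One must match the roles correctly ($A$ acting on the $x$-columns and $A^\top$ on the $y$-columns), and check that the transposed equality constraint is the same condition. A second check is that subdifferentials here are singletons, so that "$g \in \partial$" really forces equality with the gradient. Since $f$ is differentiable and linear in each variable separately, this holds. I do not expect any genuine obstacle beyond this.
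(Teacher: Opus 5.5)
Your proposal is correct and matches the paper's proof. Both reduce to Theorem~\ref{teo:lin_int} with the identification $(X,Y,U,V)\leftarrow(X,G_y,Y,G_x)$, and both then recover the function values from $y_i^\top A x_i=\langle g^x_i,x_i\rangle=\langle g^y_i,y_i\rangle$; your Step~1 also makes explicit the necessity of the value condition, which the paper leaves implicit.
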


\begin{proof}
Suppose $A_I$ can be interpolated by $f \in \mathcal B_L$. From \eqref{problem:bilinear}, we have 
\begin{align*}
    &g^x_i = \nabla_x f(x_i, y_i) = A^\top y_i, \\
    &g^y_i = \nabla_y f(x_i, y_i) = A x_i.
\end{align*}
Therefore, by Theorem~\ref{teo:lin_int}:
\begin{align*}
    &G_x^\top X = Y^\top G_y, \\
    &G_x ^\top G_x \preceq L^2 Y^\top Y, \\
    &G_y ^\top G_y \preceq L^2 X^\top X.
\end{align*}
Next, suppose the conditions are satisfied. With Theorem~\ref{teo:lin_int} we can find a linear operator $A$ that interpolates $(X,\ G_y,\ Y,\ G_x)$. We have to verify that $\langle y, A x \rangle$ has correct values in every $(x_i,y_i)$:
\begin{align*}
    f(x_i,y_i) = \langle y_i, A x_i \rangle = \langle g^x_i, x_i \rangle = \langle g^y_i, y_i \rangle = f_i.
\end{align*}
This proves that $\langle y, A x \rangle$ interpolates $A_I$ correctly.
\end{proof}

Bilinear functions $\mathcal S_{0,0,0,0,L_{xy}}$ can be recovered by letting $\mu_x=\mu_y=L_x=L_y=0$:
\begin{equation}
    f(x,y) = y^\top A x + a^\top x + b^\top y,
\label{problem:gen_bilinear}
\end{equation}
where $A \in \mathcal L_{L_{xy}}$, $a \in \sX$, and $b \in \sY$.

Bilinear problems rarely play important role in practice. Instead, they are often used for theoretical purposes and preliminary analyses \cite{azizian2020accelerating, zhang2022near, gidel2019negative, mokhtari2020unified, daskalakis2017training, liang2019interaction}.

\begin{lemma} If $A_I$ satisfies $\forall i,j \in I$
\begin{equation}
    f_i = f_j + \langle g^x_j, x_i - x_j \rangle + \langle g^y_i, y_i - y_j \rangle 
\label{eq:cyclic_grads_1}
\end{equation}
then $\forall i,j,k,m \in I$, it holds that
\begin{equation*}
    \langle g^x_i - g^x_k, x_j - x_m \rangle = \langle g^y_j - g^y_m, y_i - y_k \rangle.
\label{eq:cyclic_grads_2}
\end{equation*}
\label{lem:cyclic_grads}
\end{lemma}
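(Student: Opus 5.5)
Starting from the identity \eqref{eq:cyclic_grads_1}, I will isolate purely bilinear cross terms by combining four instances of it so that the function values $f$ cancel. Take the identity for the pair $(i,j)$ and add the identity for the pair $(j,i)$:
\begin{align*}
0 = \langle g^x_j, x_i - x_j\rangle + \langle g^y_i, y_i - y_j\rangle + \langle g^x_i, x_j - x_i\rangle + \langle g^y_j, y_j - y_i\rangle .
\end{align*}
This gives the two-point relation
\begin{equation*}
\langle g^x_i - g^x_j, x_i - x_j\rangle = \langle g^y_i - g^y_j, y_i - y_j\rangle \quad \forall i,j\in I .
\end{equation*}
This relation involves only "diagonal" pairings, so by itself it is not enough. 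I therefore return to the full identity and eliminate $f$ differently.

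The key observation is that \eqref{eq:cyclic_grads_1} says the quantity
\begin{equation*}
\Phi(i,j) := f_j - \langle g^x_j, x_j\rangle + \langle g^x_j, x_i\rangle + \langle g^y_i, y_i - y_j\rangle
\end{equation*}
equals $f_i$ for every $j$. Hence $\Phi(i,j)$ is independent of $j$, i.e.\ $\Phi(i,j)-\Phi(i,m)=0$. Writing $h_j := f_j - \langle g^x_j,x_j\rangle$, this reads
\begin{equation*}
h_j - h_m + \langle g^x_j - g^x_m, x_i\rangle - \langle g^y_i, y_j - y_m\rangle = 0 \quad \forall i,j,m .
\end{equation*}
I then subtract the same relation with $i$ replaced by $k$. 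The terms $h_j-h_m$ cancel, leaving
\begin{equation*}
\langle g^x_j - g^x_m, x_i - x_k\rangle = \langle g^y_i - g^y_k, y_j - y_m\rangle \quad \forall i,j,k,m .
\end{equation*}
After renaming $(i,k)\leftrightarrow(j,m)$, this is exactly the claimed equality
\begin{equation*}
\langle g^x_i - g^x_k, x_j - x_m\rangle = \langle g^y_j - g^y_m, y_i - y_k\rangle .
\end{equation*}

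The only delicate point is keeping track of which index carries $g^x$ and which carries $g^y$ in \eqref{eq:cyclic_grads_1}. The $x$-gradient is attached to the second index $j$, and the $y$-gradient to the first index $i$. I would verify the final renaming carefully so that the pairing matches the statement. Everything else is linear cancellation.
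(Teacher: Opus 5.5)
Your proof is correct, and it reaches the result by a different, more direct combination than the paper. The paper sums the hypothesis around cycles of length $2$, $3$ and $4$, in the spirit of the cyclic-sum argument of Lemma~\ref{lem:lin_ineq}. It obtains, in turn:
\begin{itemize}
\item the two-index relation;
\item the three-index relation $\langle g^x_i - g^x_k, x_j - x_k\rangle = \langle g^y_j - g^y_k, y_i - y_k\rangle$;
\item the four-index identity.
\end{itemize}
Each stage is combined with the earlier ones, and those combination steps are left implicit.

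You instead use the fact that the hypothesis is an equality, so negative coefficients are allowed. Your quantity $\Phi(i,j)-\Phi(i,m)$ minus its $k$-analogue is the signed combination of the instances $(i,j)$, $(i,m)$, $(k,j)$, $(k,m)$ with coefficients $+1,-1,-1,+1$. In it all function values cancel in one step. Put differently, the hypothesis says that $T(i,j) := \langle g^x_j, x_i\rangle - \langle g^y_i, y_j\rangle$ is separable, $T(i,j) = a_i + b_j$ with $a_i = f_i - \langle g^y_i, y_i\rangle$ and $b_j = \langle g^x_j, x_j\rangle - f_j$. The claimed identity is exactly the vanishing of the mixed second difference $T(i,j)-T(i,m)-T(k,j)+T(k,m)$, after your renaming.

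Your route makes every cancellation explicit. It also makes your opening two-point computation unnecessary: that relation is just the case $j=i$, $m=k$ of the final identity, and you never use it. The paper's cyclic route uses only nonnegative combinations, which fits its cyclic-monotonicity viewpoint, but nothing in this lemma requires that restriction.
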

\begin{proof}
We write inequalities \eqref{eq:cyclic_grads_1} in cyclic manner and add them up. For cycles of length $2$,
\begin{align*}
    &f_i = f_j + \langle g^x_j, x_i - x_j \rangle + \langle g^y_i, y_i - y_j \rangle, \\
    &f_j = f_i + \langle g^x_i, x_j - x_i \rangle + \langle g^y_j, y_j - y_i \rangle,
\end{align*}
we obtain
\begin{align}
    \langle g^x_i - g^x_j, x_i - x_j \rangle = \langle g^y_i - g^y_j, y_i - y_j \rangle.
\label{eq:cyclic_grads_3}
\end{align}
After doing the same for a cycle of length $3$ and combining with \eqref{eq:cyclic_grads_3}, we get
\begin{align}
    \langle g^x_i - g^x_k, x_j - x_k \rangle = \langle g^y_j - g^y_k, y_i - y_k \rangle.
\label{eq:cyclic_grads_4}
\end{align}
Finally, for a cycle of length $4$ and combining with both \eqref{eq:cyclic_grads_3} and \eqref{eq:cyclic_grads_4}, we obtain \eqref{eq:cyclic_grads_2}.

\end{proof}

The proof of the next Theorem~\ref{teo:gen_bilinear} employs similar argument to Theorem~\ref{teo:bil_int}.

\begin{theorem} \label{teo:gen_bilinear} Sequence $A_I$ is $\mathcal S_{0,0,0,0,L_{xy}}$-interpolable if and only if $\forall i, j \in I$:
\begin{align*}
    f_i = f_j + \langle g^x_j, x_i - x_j \rangle + \langle g^y_i, y_i - y_j \rangle,
\end{align*}
and $\forall p \in I$:
\begin{align*}
    \begin{cases*}
        G_x^p {}^\top G_x^p \preceq L_{xy}^2 Y^p{}^\top Y^p,\\
        G_y^p {}^\top G_y^p \preceq L_{xy}^2 X^p{}^\top X^p,
    \end{cases*}
\end{align*}
where $X^p, Y^p, G_x^p, G_y^p$ are matrices that have $\left\{x_i-x_p\right\}_{i\in I}$, $\left\{y_i-y_p\right\}_{i\in I}$, \\ $\left\{g^x_i-g^x_p\right\}_{i\in I}$, and $\left\{g^y_i-g^y_p\right\}_{i\in I}$ as columns respectively.
\end{theorem}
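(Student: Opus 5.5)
Necessity first. If $f(x,y)=y^\top A x+a^\top x+b^\top y$ with $A\in\mathcal L_{L_{xy}}$, then $g^x_i=A^\top y_i+a$, $g^y_i=Ax_i+b$ and $f_i=y_i^\top Ax_i+a^\top x_i+b^\top y_i$. Direct substitution shows that
\[
f_j+\langle g^x_j,x_i-x_j\rangle+\langle g^y_i,y_i-y_j\rangle-f_i=0 .
\]
The quadratic terms cancel because $y_j^\top A x_i - y_j^\top A x_j + y_i^\top A x_i - y_j^\top A x_i - y_i^\top A x_i + y_j^\top A x_j=0$, and the linear terms cancel trivially. Moreover $g^x_i-g^x_p=A^\top(y_i-y_p)$ and $g^y_i-g^y_p=A(x_i-x_p)$. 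So, for each $p$, the matrices $(X^p,G_y^p,Y^p,G_x^p)$ are $\mathcal L_{L_{xy}}$-interpolated by $A$, and Theorem~\ref{teo:lin_int} yields the two semidefinite inequalities.

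For sufficiency, fix one base index $p\in I$ and work with the shifted data. By Lemma~\ref{lem:cyclic_grads}, taking $k=m=p$,
\[
\langle g^x_i-g^x_p,\,x_j-x_p\rangle=\langle g^y_j-g^y_p,\,y_i-y_p\rangle \quad \forall i,j,
\]
which is exactly $G_x^p{}^\top X^p = Y^p{}^\top G_y^p$ read entrywise, after transposing appropriately to match the form $X^\top V=Y^\top U$. Together with the two assumed semidefinite inequalities for this $p$, Theorem~\ref{teo:lin_int} applied to $(X^p,G_y^p,Y^p,G_x^p)$ gives $A\in\mathcal L_{L_{xy}}$ with
\[
A(x_i-x_p)=g^y_i-g^y_p,\qquad A^\top(y_i-y_p)=g^x_i-g^x_p .
\]
Set $a:=g^x_p-A^\top y_p$ and $b:=g^y_p-Ax_p$. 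Then $A^\top y_i+a=g^x_i$ and $Ax_i+b=g^y_i$ for all $i$, so the gradients match. Finally, choose the constant $c:=f_p-(y_p^\top Ax_p+a^\top x_p+b^\top y_p)$ and define $f(x,y)=y^\top Ax+a^\top x+b^\top y+c$. A constant is harmless: it can be absorbed into the linear terms, or we may simply note that the class in \eqref{problem:gen_bilinear} should admit constants. Otherwise I would argue that $c$ can be absorbed through the shift, since the class $\mathcal S_{0,0,0,0,L_{xy}}$ as defined by the Lipschitz conditions clearly contains constants.

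It remains to check $f(x_i,y_i)=f_i$ for all $i$. The interpolant $f$ satisfies the equality condition exactly (shown in the necessity part), with the same $g$'s, and so do the data. Taking $j=p$ in both:
\[
f(x_i,y_i)=f(x_p,y_p)+\langle g^x_p,x_i-x_p\rangle+\langle g^y_i,y_i-y_p\rangle=f_p+\dots=f_i .
\]
The main obstacle is bookkeeping in the appeal to Theorem~\ref{teo:lin_int}: matching which matrices play the roles of $X,Y,U,V$ and verifying the cross condition. That verification is precisely Lemma~\ref{lem:cyclic_grads} with $k=m=p$. Also, only one $p$ is actually needed for sufficiency; the conditions for all $p$ are harmless since necessity gives them all.
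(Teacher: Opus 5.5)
Your proof is correct and follows essentially the same route as the paper: Lemma~\ref{lem:cyclic_grads} with $k=m=p$ gives the cross condition, Theorem~\ref{teo:lin_int} applied to $(X^p,G_y^p,Y^p,G_x^p)$ gives $A$, and an affine-plus-bilinear interpolant anchored at the base index is checked against the equality hypothesis with $j=p$. The paper does that last check by direct expansion, which is the same computation as reusing your necessity identity.

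One small correction: a nonzero constant cannot in general be absorbed into the linear terms. For example, take the single point $x_0=0$, $y_0=0$ with zero gradients and $f_0=1$. The correct justification is the other one you give, namely that $\mathcal S_{0,0,0,0,L_{xy}}$ contains constants; the paper's own interpolant $F$ also carries a constant term.
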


\begin{proof}
The necessity can be verified straightforwardly. Next, suppose that the conditions are satisfied. With Lemma~\ref{lem:cyclic_grads}, we obtain $\forall p, k, m \in I$:
\begin{align*}
    &G_y^m{}^\top Y^k = X^m{}^\top G_x^k, \\
    &G_x^p{}^\top G_x^p \preceq L_{xy}^2 Y^p{}^\top Y^p, \\
    &G_y^p{}^\top G_y^p \preceq L_{xy}^2 X^p{}^\top X^p.
\end{align*}
By Theorem~\ref{teo:lin_int}, for any $p \in I$, there exists $A_p \in \mathcal L_{L_{xy}}$ such that $\forall i \in I$:
\begin{align*}
    &g^x_i - g^x_p = A_p(y_i - y_p), \\
    &g^y_i - g^y_p = A_p^\top (x_i - x_p).
\end{align*}
We choose matrix $A := A_p$, corresponding to some particular $p$, for example $p = 0$, and construct a function:
\begin{align*}
    F(x,y) &= \langle x, A y \rangle + \langle g^x_0 - Ay_0, x \rangle + \langle g^y_0 - A ^\top x_0,y\rangle \\&\quad+ f_0 - \langle g^x_0,x_0\rangle - \langle g^y_0,y_0 \rangle + \langle x_0, A y_0 \rangle.
\end{align*}
Now we show that $F(x,y)$ correctly interpolates ${(x_i, y_i, g^x_i, g^y_i, f_i)}_{i \in I}$:
\begin{align*}
    F(x_i, y_i) &= f_0 + \langle g^x_0,x_i - x_0\rangle + \langle x_i, A(y_i - y_0) \rangle - \langle x_0, A(y_i - y_0) \rangle + \langle g^y_0, y_i - y_0 \rangle \\&= f_0 + \langle g^x_0, x_i - x_0 \rangle + \langle g^y_i, y_i - y_0 \rangle \\&= f_i, \\
    \nabla_xF(x_i,y_i) &= Ay_i - Ay_0 + g^x_0 \\ &= A(y_i-y_0) + g^x_0 \\&= g^x_i, \\
    \nabla_y F(x_i,y_i) &= A^\top x - A^\top x_0 + g^y_0 \\&= g^y_i.
\end{align*}
This concludes the proof.

\end{proof}

\subsection{Convex--concave functions with bilinear coupling}

Consider composite functions of the form
\begin{align}
     f(x,y) = p(x) + y^\top A x- q(y),
\label{problem:bilinear_coupling}
\end{align}
where $p \in \mathcal F_{\mu_x,L_x}$, $q \in \mathcal F_{\mu_y,L_y}$, and $A \in \mathcal L_{L_{xy}}$. We denote the class by $\mathcal K_{\mu_x,\mu_y,L_x,L_y,L_{xy}}$. It's straightforward to verify that $\mathcal K_{\mu_x,\mu_y,L_x,L_y,L_{xy}} \subset \mathcal S_{\mu_x,\mu_y,L_x,L_y,L_{xy}}$.

Because we know interpolation conditions of the individual composites, derivation of the interpolation conditions is straightforward.

\begin{theorem}
Sequence $A_I$ is $\mathcal K_{\mu_x,\mu_y,L_x,L_y,L_{xy}}$-interpolable if and only if $\forall i \in I$, there exist $h^x_i, \phi^x_i \in \sX$, $h^y_i, \phi^y_i \in \sY$ and $f^x_i, f^y_i \in \R$ such that 
\begin{align*}
    &f_i = f^x_i + \langle h^x_i, x_i \rangle - f^y_i, \\
    &g^x_i = \phi^x_i + h^x_i, \\
    &g^y_i = h^y_i - \phi^y_i, \\
    &f^x_i \geqslant f^x_j + \langle \phi^x_j,x_i-x_j\rangle \\&\quad+ \frac{L_x}{2(L_x-\mu_x)} \left(\frac{1}{L_x}\|\phi^x_i-\phi^x_j\|_2^2 + \mu_x \|x_i-x_j\|_2^2 - 2\frac{\mu_x}{L_x}\langle \phi^x_i-\phi^x_j, x_i-x_j\rangle\right), \\
    &f^y_i \geqslant f^y_j + \langle \phi^y_j, y_i-y_j \rangle \\&\quad+ \frac{L_y}{2(L_y-\mu_y)} \left(\frac{1}{L_y}\|\phi^y_i-\phi^y_j\|_2^2 + \mu_y \|y_i-y_j\|_2^2 - 2\frac{\mu_y}{L_y}\langle \phi^y_i-\phi^y_j, y_i-y_j \rangle\right), \\
    &H_x^\top X = Y^\top H_y, \\
    &H_x ^\top H_x \preceq L_{xy}^2 Y^\top Y, \\
    &H_y ^\top H_y \preceq L_{xy}^2 X^\top X.
\end{align*}
where $X, Y, H_x, H_y$ are matrices that have $\{x_i\}_{i\in I}$, $\{y_i\}_{i\in I}$, $\{h^x_i\}_{i\in I}$, $\{h^y_i\}_{i\in I}$ as columns respectively.
\label{teo:bil_coupling_int}
\end{theorem}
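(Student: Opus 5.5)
The plan is to decompose the interpolation problem along the three composites and invoke the known interpolation results for each: Theorem~\ref{teo:SC_int} for the smooth strongly convex parts $p$ and $q$, and Theorem~\ref{teo:lin_int} for the coupling matrix $A$. The auxiliary variables in the statement are meant to carry these interpretations:
\begin{itemize}
\item $h^x_i = A^\top y_i$ and $h^y_i = A x_i$ are the gradients of the bilinear term;
\item $\phi^x_i = \nabla p(x_i)$ and $\phi^y_i = \nabla q(y_i)$ are the gradients of the separable terms;
\item $f^x_i = p(x_i)$ and $f^y_i = q(y_i)$ are the corresponding values.
\end{itemize}
The value of the bilinear term at $(x_i,y_i)$ is $y_i^\top A x_i = \langle A^\top y_i, x_i\rangle = \langle h^x_i, x_i\rangle$, which explains the first equation.

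\emph{Necessity.} Let $f = p + y^\top A x - q$ interpolate $A_I$, and define the auxiliary quantities as above. Differentiating gives $\nabla_x f(x_i,y_i) = \nabla p(x_i) + A^\top y_i = \phi^x_i + h^x_i$ and $\nabla_y f(x_i,y_i) = A x_i - \nabla q(y_i) = h^y_i - \phi^y_i$. We also have $f_i = f^x_i + \langle h^x_i, x_i\rangle - f^y_i$. The two smooth strongly convex inequalities are exactly the necessity part of Theorem~\ref{teo:SC_int}, applied to $\{(x_i,\phi^x_i,f^x_i)\}$ for $p$ and to $\{(y_i,\phi^y_i,f^y_i)\}$ for $q$. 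The matrix conditions follow from the necessity part of Theorem~\ref{teo:lin_int} applied to the data $(X, H_y, Y, H_x)$, since $H_y = AX$ and $H_x = A^\top Y$. This gives $X^\top H_x = H_y^\top Y$, whose transpose is $H_x^\top X = Y^\top H_y$, together with the two Gram inequalities. This is the same identification as in the proof of Theorem~\ref{teo:bil_int}.

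\emph{Sufficiency.} Assume the auxiliary quantities exist.
\begin{itemize}
\item By Theorem~\ref{teo:SC_int}, there are $p \in \mathcal F_{\mu_x,L_x}$ with $p(x_i)=f^x_i$, $\phi^x_i \in \partial p(x_i)$, and $q \in \mathcal F_{\mu_y,L_y}$ with $q(y_i)=f^y_i$, $\phi^y_i\in\partial q(y_i)$. Since $L<\infty$, these are single gradients.
\item By Theorem~\ref{teo:lin_int}, applied exactly as in Theorem~\ref{teo:bil_int}, there is $A\in\mathcal L_{L_{xy}}$ with $A x_i = h^y_i$ and $A^\top y_i = h^x_i$.
\end{itemize}
Set $f(x,y) = p(x) + y^\top A x - q(y)$. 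Then $f \in \mathcal K_{\mu_x,\mu_y,L_x,L_y,L_{xy}}$, and
\[
\nabla_x f(x_i,y_i) = \phi^x_i + h^x_i = g^x_i,\qquad \nabla_y f(x_i,y_i) = h^y_i - \phi^y_i = g^y_i .
\]
For the values, $f(x_i,y_i) = f^x_i + y_i^\top A x_i - f^y_i = f^x_i + \langle A^\top y_i, x_i\rangle - f^y_i = f^x_i + \langle h^x_i,x_i\rangle - f^y_i = f_i$.

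No step here is a real obstacle, since everything reduces to the cited results. The only points needing care are bookkeeping ones. First, Theorem~\ref{teo:lin_int} must be applied with the roles $(X,Y,U,V)\mapsto(X,H_y,Y,H_x)$, and its equality condition must match $H_x^\top X = Y^\top H_y$ up to transposition. Second, the $q$-inequality must be written with the sign convention of $q$ itself, not of $-q$; this is why $\phi^y_i$ enters $g^y_i$ with a minus sign. Third, the degenerate case $L=\mu$ in Theorem~\ref{teo:SC_int} should be read in the limiting sense, as the paper does implicitly.
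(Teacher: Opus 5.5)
Your proposal is correct and follows the paper's approach. The paper says only that the result follows by combining Theorem~\ref{teo:SC_int} for $p$ and $q$ with the bilinear interpolation result (Theorem~\ref{teo:bil_int}, which rests on Theorem~\ref{teo:lin_int}) for $A$, using the same splitting logic as Lemma~\ref{lem:sep_int}; that is exactly your argument, and your explicit bookkeeping (the role assignment $(X,H_y,Y,H_x)$, the transposed equality condition, and the sign convention for $q$) fills in details the paper leaves to the reader.
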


\begin{proof}
The statement of the theorem follows from combining Theorem~\ref{teo:SC_int} and Theorem~\ref{teo:bil_int}. The logic of the proof is straightforward and similar to Lemma~\ref{lem:sep_int} (interpolation conditions for difference of strongly convex functions).

\end{proof}

\section{Construction of PEP for composite saddle-point problem} \label{sec:pep_construction}

We construct an SDP program to solve PEP for first-order methods that find a saddle point of  $f \in \mathcal K_{\mu_x,\mu_y,L_x,L_y,L_{xy}}$. Consider a family of fixed-step first-order iterative methods:
\begin{align*}
    &u^x_{k} = \overset{n}{\underset{j=0}{\sum}} \theta^x_j \hspace{2pt} x_{k-j}, \quad
    v^y_{k} = \overset{n}{\underset{j=0}{\sum}} \phi^y_j \hspace{2pt} y_{k-j}, \label{method:M}\tag{$\mathcal A$}\\
    &g^x_{k} = \nabla_x p(u^x_{k}), \quad
    h^x_{k} = A^\top v^y_{k}, \\
    &x_{k+1} = \overset{n}{\underset{j=0}{\sum}} \alpha^x_{j} \hspace{2pt} x_{k-j} - \beta^x \hspace{2pt} g^x_{k} -  \gamma^x \hspace{2pt} h^x_{k}, \\
    &u^y_{k} = \overset{n}{\underset{j=0}{\sum}} \theta^y_j \hspace{2pt} y_{k-j}, \quad
    v^x_{k} = \overset{n+1}{\underset{j=0}{\sum}} \phi^x_j \hspace{2pt} x_{k+1-j}, \\
    &g^y_{k} = \nabla_y q(u^y_{k}), \quad
    h^y_{k} = A v^x_{k}, \\
    &y_{k+1} = \overset{n}{\underset{j=0}{\sum}} \alpha^y_{j} \hspace{2pt} y_{k-j} -  \beta^y \hspace{2pt} g^y_{k} +  \gamma^y \hspace{2pt} h^y_{k},
\end{align*}
where $A$ is a coupling matrix from \eqref{problem:bilinear_coupling} and $n$ is degree (or memory) of the algorithm. The algorithm should have a fixed point $z_\ast = (x_\ast, y_\ast)$ which necessitates
\begin{align}
    \overset{n}{\underset{j=0}{\sum}} \theta^x_j =
    \overset{n}{\underset{j=0}{\sum}} \theta^y_j =
    \overset{n+1}{\underset{j=0}{\sum}} \phi^x_j =
    \overset{n}{\underset{j=0}{\sum}} \phi^y_j =
    \overset{n}{\underset{j=0}{\sum}} \alpha^x_{j} = \overset{n}{\underset{j=0}{\sum}} \alpha^y_{j} = 1
    \label{method_valid_cond}
\end{align}
Most known accelerated schemes can be recovered by performing (at most) two iterations of \eqref{method:M} with different sets of constants. Following standard PEP logic, we introduce a finite dimensional formulation of PEP. All the constraints are expressed in terms of oracle's and algorithmic outputs. The same goes for variables. Suppose \eqref{method:M} iterates for $k = 0,\ldots,N$ ($N+1$ iterations) and denote the set of indexes: $I := \left\{0,\ldots ,N,\ast \right\}$.
\begin{align} \label{problem:finite_PEP}\tag{finite-spp-PEP}
    &\und{\sup}{\{x_i, y_i, h^x_i, h^y_i, g^x_i, g^y_i, f^x_i, f^y_i\}_{i \in I}} \mathcal P(\{x_i, y_i, h^x_i, h^y_i, g^x_i, g^y_i, f^x_i, f^y_i\}_{i \in I})  \\
    &\text{subject to } \nonumber \\
    &\quad \quad\quad \{x_i, y_i,u^x_i,u^y_i,v^x_i,v^y_i\}^{N}_1 \text{ are generated by method \eqref{method:M}}, \nonumber \\
    &\quad \quad\quad\left\{(u^x_i, g^x_i, f^x_i)\right\}_{i \in I} \text{ is } \mathcal F_{\mu_x,L_x}-\text{interpolable}, \nonumber \\
    &\quad \quad\quad\left\{(u^y_i, g^y_i, f^y_i)\right\}_{i \in I} \text{ is } \mathcal F_{\mu_y,L_y}-\text{interpolable}, \nonumber \\
    &\quad \quad\quad\left\{(v^y_i,h^x_i)\right\}_{i \in I}, \{(v^x_i,h^y_i)\}_{i \in I} \text{ is } \mathcal L_{L_{xy}}-\text{interpolable}, \nonumber \\
    &\quad \quad\quad \left(x_\ast,g^x_\ast, h^x_\ast, y_\ast, g^y_\ast, h^y_\ast\right) = \left(\textbf{0}_{d_x}, \textbf{0}_{d_x}, \textbf{0}_{d_x}, \textbf{0}_{d_y}, \textbf{0}_{d_y}, \textbf{0}_{d_y}\right), \nonumber \\
    &\quad\quad\quad \|x_0-x_\ast\|_2^2 + \|y_0-y_\ast\|_2^2 \leqslant R^2. \nonumber
\end{align}
Because, we use necessary and sufficient interpolation conditions, the finite dimensional formulation is equivalent to the original formulation in terms of optimal value \cite{taylor2017smooth}.

Because $\mathcal K_{\mu_x,\mu_y,L_x,L_y,L_{xy}}$ is invariant to translations, we let $(x_*,y_*) = (\textbf{0}_{d_x}, \textbf{0}_{d_y})$. From the problem being unconstrained, we have $g^x_* = -h^x_* = \textbf{0}_{d_x}$ and $g^y_* = h^y_* = \textbf{0}_{d_y}$. By shifting function's values, we can set $f^x_* = f^y_* = 0$. We aim to rewrite \eqref{problem:finite_PEP} as a convex semidefinite program. First, we initialize row basis vectors for initial conditions, gradient values and functional values $\overline{x}_{k}, \overline{g}^{x}_{k}, \overline{h}^{x}_{k} \in \R^{2N+3}$, 
$\overline{y}_{k}, \overline{g}^{y}_{k}, \overline{h}^{y}_{k} \in \R^{2N+3}$, $\overline{f}^{x}_{k} \in \R^{N+1}, \overline{f}^{y}_{k} \in \R^{N+1}$:
\begin{align*}
    &\overline{x}_{k} = \overline{y}_{k}:= \overline{e}_{1}^\top \quad k \in {-n,..,0}, \\
    &\overline{g}^{x}_{k} = \overline{g}^{y}_{k} := \overline{e}_{k+2}^\top \quad k \in {0,..,N}, \\ 
    &\overline{h}^{x}_{k} = \overline{h}^{y}_{k} := \overline{e}_{k+N+3}^\top \quad k \in {0,..,N}, \\
    &\overline{f}^{x}_{k} = \overline{f}^{y}_{k} := \overline{e}_{k+1}^\top \quad k \in {0,..,N}, \\
    &\overline{x}_{\ast} = \overline{y}_{\ast} = \overline{g}^{x}_{\ast} = \overline{g}^{y}_{\ast} = \overline{h}^{x}_{\ast} = \overline{h}^{y}_{\ast} := \mathbf{0}_{ 2N+3}^\top, \\  
    &\overline{f}^{x}_{\ast} = \overline{f}^{y}_{\ast} := \mathbf{0}_{N+1}^\top, 
\end{align*}
where $\overline{e}_{i}$ is the $i^{\text{th}}$ unit vector. The rest of row vectors are calculated in accordance with the algorithm:
\begin{align*}
    &\overline{u}^{x}_{k} = \overset{n}{\underset{j=0}{\sum}} \theta^x_j \hspace{2pt} \overline{x}_{k-j}, \quad \overline{v}^{y}_{k} = \overset{n}{\underset{j=0}{\sum}} \phi^y_j \hspace{2pt} \overline{y}_{k-j}, \\
    &\overline{u}^{y}_{k} = \overset{n}{\underset{j=0}{\sum}} \theta^y_j \hspace{2pt} \overline{y}_{k-j}, \quad \overline{v}^{x}_{k} = \overset{n+1}{\underset{j=0}{\sum}} \phi^x_j \hspace{2pt} \overline{x}_{k+1-j}, \\
    &\overline{x}_{k+1} = \overset{n}{\underset{j=0}{\sum}} \alpha^x_{j} \hspace{2pt} \overline{x}_{k-j} -  \beta^x \hspace{2pt} \overline{g}^{x}_{k} -  \gamma^x \hspace{2pt} \overline{h}^{x}_{k}, \\
    &\overline{y}_{k+1} = \overset{n}{\underset{j=0}{\sum}} \alpha^y_{j} \hspace{2pt} \overline{y}_{k-j} -  \beta^y \hspace{2pt} \overline{g}^{y}_{k} +  \gamma^y \hspace{2pt} \overline{h}^{y}_{k}.
\end{align*}
We introduce Gram matrices $G_x = W_x^\top W_x \in \SM^{2N+3}$, $G_y= W_y^\top W_y \in \SM^{2N+3}$ where
\begin{align*}
    &W_x := \left[x_0 \quad g^x_{0} \quad \ldots \quad g^x_{N} \quad h^x_{0} \quad \ldots \quad h^x_{N}\right], \quad
    W_y := \left[y_0 \quad g^y_{0} \quad ... \quad g^y_{N} \quad h^y_{0} \quad ... \quad h^y_{N}\right],
\end{align*}
and also vectors of function values: 
\begin{align*}
    &\textbf{f}^x := \left[ f^x_{0} \ldots  f^x_N\right]^\top, \quad
    \textbf{f}^y := \left[ f^y_{0}, \ldots , f^y_N\right]^\top.
\end{align*}
Next, we express the interpolation conditions in terms of Gram matrices. With the Gram matrices introduced earlier, the interpolation conditions for strongly convex functions (Theorem~\ref{teo:SC_int}) $f(x) \in \mathcal F_{\mu_x,L_x}$, and $g(y) \in \mathcal F_{\mu_y,L_y}$ can be written as
\begin{align*}
    &0 \leqslant m^{x}_{ij}{}^\top\textbf{f}^x + \text{tr}(G_x M^{x}_{ij}) \quad \forall i,j \in I, \\
    &0 \leqslant m^{y}_{ij}{}^\top\textbf{f}^y + \text{tr}(G_y M^{y}_{ij}) \quad \forall i,j \in I,
\end{align*}
where matrices $m^{x}_{ij}, m^{y}_{ij} \in R^{N+1}$ and $M^{x}_{ij}, M^{y}_{ij} \in \SM^{2N+3}$ are defined as
\begin{align*}
    &m^{x}_{ij} := (L_x-\mu_x)(\overline{f}^{x}_{i} -\overline{f}^{x}_{j})^\top, \quad m^{y}_{ij} := (L_y-\mu_y)(\overline{f}^{y}_{i} -\overline{f}^{y}_{j})^\top, \\
    &M^{x}_{ij} := 
    \begin{bmatrix} 
        \overline{u}^{x}_i\\ \overline{u}^{x}_j\\ \overline{g}^{x}_i\\ 
        \overline{g}^{x}_j
    \end{bmatrix}^\top
    M^x
    \begin{bmatrix} 
        \overline{u}^{x}_i\\ \overline{u}^{x}_j\\ \overline{g}^{x}_i\\ 
        \overline{g}^{x}_j
    \end{bmatrix} ,\quad
    M^{y(K)}_{ij} := 
    \begin{bmatrix} 
        \overline{u}^{y}_i\\ \overline{u}^{y}_j\\ \overline{g}^{y}_i\\ 
        \overline{g}^{y}_j
    \end{bmatrix}^\top
    M^y
    \begin{bmatrix} 
        \overline{u}^{y}_i\\ \overline{u}^{y}_j\\ \overline{g}^{y}_i\\ 
        \overline{g}^{y}_j
    \end{bmatrix},
\end{align*}
where $M^x, M^y \in \SM^4$:
\begin{align}
    &M^x := M(\mu_x,L_x),\quad M^y := M(\mu_y,L_y), \label{eq:interp_matrix} \\ \nonumber \\
	&M(\mu,L) := \frac{1}{2} \begin{bmatrix*}[r]
	-\mu L  & \mu L & \mu & -L \\
	\mu L & -\mu L & -\mu & L \\
	\mu  & -\mu & -1 & 1 \\
	-L & L & 1 & -1 
    \end{bmatrix*}, \nonumber
\end{align}
Conditions for a bilinear function (Theorem~\ref{teo:bil_int}) can be rewritten as
\begin{align*}
    & B_{Hx}^\top G_x B_{vx} = B_{vy}^\top G_y B_{Hy}, \\
    &B_{Hx}^\top G_x B_{Hx} \preceq L_{xy}^2 B_{vy}^\top G_y B_{vy}, \\
    &B_{Hy}^\top G_y B_{Hy} \preceq L_{xy}^2 B_{vx}^\top G_xB_{vx},
\end{align*}
where $B_{Hx}, B_{Hy} \in \R^{(2N+3)\times(N+1)}$ and $B_{vx}, B_{vy} \in \R^{(2N+3)\times(N+1)}$ are defined as 
\begin{align*}
    &B_{Hx} = \left[(\overline{h}^{x}_0)^\top, \quad \ldots, \quad (\overline{h}^{x}_N)^\top\right], \quad B_{Hy} = \left[(\overline{h}^{y}_0)^\top, \quad \ldots, \quad (\overline{h}^{y}_N)^\top\right], \\
    &B_{vx} = \left[(\overline{v}^{x}_{0})^\top, \quad..., \quad (\overline{v}^{x}_{N})^\top\right], \quad
    B_{vy} = \left[(\overline{v}^{y}_{0})^\top, \quad ..., \quad (\overline{v}^{y}_{N})^\top\right].
\end{align*}
And lastly, performance metric
\begin{align*}
    \mathcal P := \text{tr}(G_x P_x) + \text{tr}(G_y P_y) + p_x^\top \textbf{f}^x + p_y^\top \textbf{f}^y 
\end{align*}
After expressing constraints and performance metric in terms of Gram matrices, we can reformulate (or, \textit{lift}) \eqref{problem:finite_PEP} so that optimization is carried out over $G_x, G_y, \textbf{f}^x, \textbf{f}^y$.
\begin{align*} \label{problem:SDP_PEP}\tag{sdp-spp-PEP}
    &\und{\sup}{G_x, G_y, \textbf{f}^x, \textbf{f}^y} \text{tr}(G_x P_x) + \text{tr}(G_y P_y) + p_x^\top \textbf{f}^x + p_y^\top \textbf{f}^y  \\
    &\text{subject to} \\
    &\quad\quad\quad m^{x}_{ij}{}^\top \textbf{f}^x + \text{tr}(G_x M^{x}_{ij}) \geqslant 0, \quad \forall i,j \in I,\\
    &\quad\quad\quad m^{y}_{ij}{}^\top \textbf{f}^y + \text{tr}(G_y M^{y}_{ij}) \geqslant 0, \quad \forall i,j \in I,\\
    &\quad\quad\quad B_{Hx}^\top G_x B_{vx} - B_{vy}^\top G_y B_{Hy} = 0, \\
    &\quad\quad\quad B_{Hx}^\top G_x B_{Hx} - L_{xy}^2 B_{vy}^\top G_y B_{vy} \preceq 0, \\
    &\quad\quad\quad B_{Hy}^\top G_y B_{Hy} - L_{xy}^2 B_{vx}^\top G_x B_{vx}\preceq 0, \\
    &\quad\quad\quad \text{tr}(G_xM^x_I) + \text{tr}(G_y M^y_I) - R^2 \leqslant 0.
\end{align*}

If there exists one-to-one correspondence between matrices $G_x$, $G_y$ and functions from $\mathcal K_{\mu_x,\mu_y,L_x,L_y,L_{xy}}$ then the optimal value of \eqref{problem:SDP_PEP} is equal to the optimal value of \eqref{problem:finite_PEP}. The correspondence can be guaranteed if dimensions satisfy $\min(d_x,d_y) \geqslant 2N+3$, which allows the matrices to be full rank \cite{taylor2017smooth}. In the case of $d_x < 2N+3$ or $d_y < 2N+3$, the optimal value equivalence will still hold under the additional rank constraints
\begin{align}
        &\text{Rank}(G_x) \leqslant d_x, \quad \text{Rank}(G_y) \leqslant d_y.
\end{align}
However, the rank constraints render the program non-convex. For large dimensional problems, one can assume that the condition $\min(d_x,d_y) \geqslant 2N+3$ is satisfied. In that case, the optimum of \eqref{problem:SDP_PEP} is attained and finite provided the constants $L_x, L_y, L_{xy}$ are bounded. This is because starting from bounded $(x_0,y_0)$, all other iterates and gradients are bounded as well due to algorithmic constraints and smoothness of $\mathcal K_{\mu_x,\mu_y,L_x,L_y,L_{xy}}$.

Any primal feasible solution of \eqref{problem:SDP_PEP}  provides a lower bound on the worst-case performance. To obtain analytical upper bounds, it's more convenient to work with the Lagrangian dual of \eqref{problem:SDP_PEP}. The feasible solutions of the dual program constitute  upper bounds on the worst-case behavior \cite{goujaud2023fundamental, taylor2017smooth}.

\section{Conclusion}

We derived necessary and sufficient interpolation conditions for classes of convex--concave functions and applied PEP framework to composite saddle-point problem. The interpolation problem was solved for general convex--concave functions and special cases of smooth class $\mathcal S_{\mu_x,\mu_y,L_x,L_y,L_{xy}}$. Interpolation by general smooth class appears to be  considerably more involved and is left for future work. For PEP, we considered both the standard formulation and automatic generation of quadratic Lyapunov functions.

\section{Acknowledgements}

The research is supported by the Ministry of Science and Higher Education of the Russian Federation (Goszadaniye), project No. FSMG-2024-0011

\bibliographystyle{plain}

\bibliography{new_references}

@book{bauschke2011convex,
  title={Convex Analysis and Monotone Operator Theory in Hilbert Spaces},
  author={Bauschke, Heinz H and Combettes, Patrick L},
  year={2011},
  publisher={Springer},
  address={Berlin},
  series={CMS Books in Mathematics},
  isbn={978-1-4419-9466-0}
}

@book{rockafellar1970,
  author    = {Rockafellar, R. Tyrrell},
  title     = {Convex Analysis},
  publisher = {Princeton University Press},
  year      = {1970},
  isbn      = {9780691015866},
  url       = {http://www.jstor.org/stable/j.ctt14bs1ff}
}

@article{taylor2017smooth,
  title     = {Smooth strongly convex interpolation and exact worst-case
               performance of first-order methods},
  author    = {Taylor, Adrien B and Hendrickx, Julien M and Glineur, Fran{\c c}ois},
  journal   = {Math. Program.},
  volume    = {161},
  number    = {1--2},
  pages     = {307--345},
  year      = {2017},
  doi       = {10.1007/s10107-016-1009-3}
}

@article{rockafellar1966characterization,
  title   = {Characterization of the subdifferentials of convex functions},
  author  = {Rockafellar, Ralph},
  journal = {Pacific Journal of Mathematics},
  volume  = {17},
  number  = {3},
  pages   = {497--510},
  year    = {1966},
  publisher = {Mathematical Sciences Publishers}
}

@article{artstein2022rockafellar,
  title   = {A Rockafellar-type theorem for non-traditional costs},
  author  = {Artstein-Avidan, Shiri and Sadovsky, Shay and Wyczesany, Katarzyna},
  journal = {Advances in Mathematics},
  volume  = {395},
  pages   = {108157},
  year    = {2022},
  publisher = {Elsevier}
}

@article{bousselmi2024interpolation,
  title   = {Interpolation conditions for linear operators and applications to performance estimation problems},
  author  = {Bousselmi, Nizar and Hendrickx, Julien M and Glineur, François},
  journal = {SIAM Journal on Optimization},
  volume  = {34},
  number  = {3},
  pages   = {3033--3063},
  year    = {2024},
  publisher = {SIAM}
}

@inproceedings{azizian2020accelerating,
  title     = {Accelerating smooth games by manipulating spectral shapes},
  author    = {Azizian, Waiss and Scieur, Damien and Mitliagkas, Ioannis and Lacoste-Julien, Simon and Gidel, Gauthier},
  booktitle = {Proceedings of the 23rd International Conference on Artificial Intelligence and Statistics},
  pages     = {1705--1715},
  year      = {2020},
  publisher = {PMLR}
}

@inproceedings{zhang2022near,
  title     = {Near-optimal local convergence of alternating gradient descent-ascent for minimax optimization},
  author    = {Zhang, Guodong and Wang, Yuanhao and Lessard, Laurent and Grosse, Roger B},
  booktitle = {Proceedings of the 25th International Conference on Artificial Intelligence and Statistics},
  pages     = {7659--7679},
  year      = {2022},
  organization = {PMLR}
}

@inproceedings{gidel2019negative,
  title     = {Negative momentum for improved game dynamics},
  author    = {Gidel, Gauthier and Hemmat, Reyhane Askari and Pezeshki, Mohammad and Le Priol, R{\'e}mi and Huang, Gabriel and Lacoste-Julien, Simon and Mitliagkas, Ioannis},
  booktitle = {Proceedings of the 22nd International Conference on Artificial Intelligence and Statistics},
  pages     = {1802--1811},
  year      = {2019},
  publisher = {PMLR}
}

@inproceedings{mokhtari2020unified,
  title     = {A unified analysis of extra-gradient and optimistic gradient methods for saddle point problems: Proximal point approach},
  author    = {Mokhtari, Aryan and Ozdaglar, Asuman and Pattathil, Sarath},
  booktitle = {Proceedings of the 23rd International Conference on Artificial Intelligence and Statistics},
  pages     = {1497--1507},
  year      = {2020},
  publisher = {PMLR}
}

@article{daskalakis2017training,
  title   = {Training GANs with optimism},
  author  = {Daskalakis, Constantinos and Ilyas, Andrew and Syrgkanis, Vasilis and Zeng, Haoyang},
  journal = {arXiv preprint arXiv:1711.00141},
  year    = {2017}
}

@inproceedings{liang2019interaction,
  title     = {Interaction matters: A note on non-asymptotic local convergence of generative adversarial networks},
  author    = {Liang, Tengyuan and Stokes, James},
  booktitle = {Proceedings of the 22nd International Conference on Artificial Intelligence and Statistics},
  pages     = {907--915},
  year      = {2019},
  publisher = {PMLR}
}

@inproceedings{goujaud2023fundamental,
  title     = {On fundamental proof structures in first-order optimization},
  author    = {Goujaud, Baptiste and Dieuleveut, Aymeric and Taylor, Adrien},
  booktitle = {2023 62nd IEEE Conference on Decision and Control (CDC)},
  pages     = {3023--3030},
  year      = {2023},
  organization = {IEEE}
}

@inproceedings{taylor2018lyapunov,
  title={Lyapunov functions for first-order methods: Tight automated convergence guarantees},
  author={Taylor, Adrien and Van Scoy, Bryan and Lessard, Laurent},
  booktitle={International Conference on Machine Learning},
  pages={4897--4906},
  year={2018},
  organization={PMLR}
}

@article{kalman1960control,
  title     = {Control system analysis and design via the “second method” of Lyapunov: I—Continuous-time systems},
  author    = {Kalman, Rudolf E. and Bertram, John E.},
  journal   = {Transactions of the ASME--Journal of Basic Engineering},
  volume    = {82},
  number    = {2},
  pages     = {371--393},
  year      = {1960},
  publisher = {American Society of Mechanical Engineers}
}

@book{boyd2004convex,
  title     = {Convex Optimization},
  author    = {Boyd, Stephen and Vandenberghe, Lieven},
  year      = {2004},
  publisher = {Cambridge University Press}
}

@article{KallioRosa1999,
  title   = {Large-scale convex optimization via saddle point computation},
  author  = {Kallio, M. and Rosa, C. H.},
  journal = {Operations Research},
  volume  = {47},
  number  = {1},
  pages   = {93--101},
  year    = {1999},
  doi     = {10.1287/opre.47.1.93}
}

@article{Nedic2009Subgradient,
  title   = {Subgradient Methods for Saddle-Point Problems},
  author  = {Nedi{\'c}, Angelia and Ozdaglar, Asuman E.},
  journal = {Journal of Optimization Theory and Applications},
  volume  = {142},
  number  = {1},
  pages   = {205--228},
  year    = {2009},
  doi     = {10.1007/s10957-009-9522-7}
}

@article{goodfellow2020generative,
  title   = {Generative adversarial networks},
  author  = {Goodfellow, Ian and Pouget-Abadie, Jean and Mirza, Mehdi and Xu, Bing and Warde-Farley, David and Ozair, Sherjil and Courville, Aaron and Bengio, Yoshua},
  journal = {Communications of the ACM},
  volume  = {63},
  number  = {11},
  pages   = {139--144},
  year    = {2020},
  publisher = {ACM New York, NY, USA}
}

@article{drori2014performance,
  title     = {Performance of first-order methods for smooth convex
               minimization: a novel approach},
  author    = {Drori, Yoel and Teboulle, Marc},
  journal   = {Math. Program.},
  volume    = {145},
  number    = {1--2},
  pages     = {451--482},
  year      = {2014},
  doi       = {10.1007/s10107-013-0653-0}
}

@inproceedings{li2019robust,
  title     = {Robust multi-agent reinforcement learning via minimax deep deterministic policy gradient},
  author    = {Li, Shihui and Wu, Yi and Cui, Xinyue and Dong, Honghua and Fang, Fei and Russell, Stuart},
  booktitle = {Proceedings of the AAAI Conference on Artificial Intelligence},
  pages     = {4213--4220},
  year      = {2019}
}

@article{zhang2021unified,
  title   = {A unified analysis of first-order methods for smooth games via integral quadratic constraints},
  author  = {Zhang, Guodong and Bao, Xuchan and Lessard, Laurent and Grosse, Roger},
  journal = {J. Mach. Learn. Res.},
  volume  = {22},
  number  = {1},
  articleno = {103},
  numpages  = {39},
  year    = {2021},
  publisher = {JMLR.org}
}

@inproceedings{lee2024fundamental,
  title     = {Fundamental benefit of alternating updates in minimax optimization},
  author    = {Lee, Jaewook and Cho, Hanseul and Yun, Chulhee},
  year      = {2024},
  booktitle = {Proceedings of the 41st International Conference on Machine Learning},
  publisher = {PMLR},
  articleno = {1056},
  numpages  = {76},
  address   = {Vienna, Austria},
  series    = {ICML'24}
}

@article{kim2016optimized,
  title   = {Optimized first-order methods for smooth convex minimization},
  author  = {Kim, Donghwan and Fessler, Jeffrey A},
  journal = {Mathematical Programming},
  volume  = {159},
  pages   = {81--107},
  year    = {2016},
  publisher = {Springer}
}

@inproceedings{pmlr-v37-zhanga15,
  title     = {Stochastic Primal-Dual Coordinate Method for Regularized Empirical Risk Minimization},
  author    = {Zhang, Yuchen and Lin, Xiao},
  booktitle = {Proceedings of the 32nd International Conference on Machine Learning},
  pages     = {353--361},
  year      = {2015},
  publisher = {PMLR}
}

@inproceedings{pmlr-v70-scaman17a,
  title     = {Optimal Algorithms for Smooth and Strongly Convex Distributed Optimization in Networks},
  author    = {Scaman, Kevin and Bach, Francis and Bubeck, S{\'e}bastien and Lee, Yin Tat and Massouli{\'e}, Laurent},
  booktitle = {Proceedings of the 34th International Conference on Machine Learning},
  pages     = {3027--3036},
  year      = {2017},
  publisher = {PMLR}
}

@article{peyre2019computational,
  title   = {Computational Optimal Transport: With Applications to Data Science},
  author  = {Peyr{\'e}, Gabriel and Cuturi, Marco},
  journal = {Foundations and Trends in Machine Learning},
  volume  = {11},
  number  = {5--6},
  pages   = {355--607},
  year    = {2019},
  doi     = {10.1561/2200000073},
  url     = {https://doi.org/10.1561/2200000073}
}

@article{chambolle2016introduction,
  title   = {An introduction to continuous optimization for imaging},
  author  = {Chambolle, Antonin and Pock, Thomas},
  journal = {Acta Numerica},
  volume  = {25},
  pages   = {161--319},
  year    = {2016},
  publisher = {Cambridge University Press}
}

\clearpage


\section{Appendix: Generating quadratic Lyapunov functions}

Next, we construct a small-sized SDP to verify linear convergence rate of first-order methods via generation of a quadratic Lyapunov function. The content of this subsection is almost entirely technical. We refer to the original paper \cite{taylor2018lyapunov}, where the framework was introduced for strongly convex setting and discussed in detail.

Lyapunov functions were originally proposed for stability analysis of dynamical systems \cite{kalman1960control}. Now they are an widely used tool for convergence analysis in optimization. 

\begin{definition} \label{fed:lyapunov_func} Consider function $f$ and algorithm that outputs $z_k$ at $k$-th iteration ($k > 0$). Suppose $z_*$ is the optimal point. Lyapunov function is a continuous function $\mathcal V: \sR \to \R$ such that:
\begin{align*}
    &1. \quad \forall z: \ \mathcal V(z) \geqslant 0, \\ 
    &2. \quad \mathcal V(z) = 0 \quad \text{if and only if} \quad z = z_\ast, \\
    &3. \quad \mathcal V(z) \to \infty \quad \text{as} \quad \|z\| \to \infty, \\
    &4. \quad \mathcal V(z_{k+1}) \leqslant \mathcal V(z_{k}) \quad \text{for all} \quad k \geqslant 0. 
\end{align*}
\end{definition}

The notation remains the same except we consider $N$-memory methods \eqref{method:M} that perform $K$ iterations. We denote error vectors: $\textbf{x}_k, \textbf{g}^x_{k}, \textbf{h}^x_{k} \in \R^{(N+1)d_x}$,  $\textbf{y}_k, \textbf{g}^y_{k}, \textbf{h}^y_{k} \in \R^{(N+1)d_y}$, $\textbf{f}^x_k, \textbf{f}^y_k \in \R^{N+1}$:
\begin{align*}
    &\textbf{x}_k := [(x_k - x_*)^\top, ... , (x_{k-N} - x_*)^\top]^\top, \\
    &\textbf{g}^x_{k} := [(g^x_{k} - g^x_{*})^\top, ... , (g^x_{k-N} - g^x_{*})^\top]^\top, \\
    &\textbf{h}^x_{k} := [(h^x_{k} - h^x_{*})^\top, ... , (h^x_{k-N} - h^x_{*})^\top]^\top, \\
    &\textbf{f}^x_k := [f^x_k - f^x_*, ... , f^x_{k-N} - f^x_*]^\top, \\
    &\ldots
\end{align*}
Those vectors constitute \textit{state} of the system at iteration $k$: $\xi_k = (\textbf{x}_k, \textbf{g}^x_{k}, \textbf{f}^x_k, \textbf{h}^x_{k}, \textbf{y}_k, \textbf{g}^y_{k}, \textbf{f}^y_k, \textbf{h}^y_{k})$. 

We consider quadratic Lyapunov functions of the following form ($k \geqslant N$):
\begin{align}
\mathcal{V}(\xi_k) = 
\begin{bmatrix} 
\mathbf{x}_k \\ \mathbf{g}^x_{k} \\ \mathbf{h}^x_{k}
\end{bmatrix}^\top
(P_x \otimes \mathcal I_{d_x})
\begin{bmatrix} 
\mathbf{x}_k \\ \mathbf{g}^x_{k} \\ \mathbf{h}^x_{k}
\end{bmatrix} +
\begin{bmatrix} 
\mathbf{y}_k \\ \mathbf{g}^y_{k} \\ \mathbf{h}^y_{k}
\end{bmatrix}^\top
(P_y \otimes \mathcal I_{d_y})
\begin{bmatrix} 
\mathbf{y}_k \\ \mathbf{g}^y_{k} \\ \mathbf{h}^y_{k}
\end{bmatrix} + p_x^\top \textbf{f}^x_k + p_y^\top \textbf{f}^y_k,
\label{quadratic_lyapunov_family}
\end{align}
where $P_x, P_y \in \SM^{3(N+1)}$ and $p_x, p_y \in \R^{N+1}$. By $\mathcal I_{d_x}$, $\mathcal I_{d_y}$, we denote identity matrices.

The basis vectors 
\begin{align*}
    &\overline{x}^{(K)}_{k}, \overline{g}^{x(K)}_{k}, \overline{h}^{x(K)}_{k} \in \R^{N+2K+3}, \quad \overline{f}^{x(K)}_{k}, \overline{f}^{y(K)}_{k} \in \R^{K+1}, \\
    &\overline{y}^{(K)}_{k}, \overline{g}^{y(K)}_{k}, \overline{h}^{y(K)}_{k} \in \R^{N+2K+3}
\end{align*}
are initialized similarly:
\begin{align*}
    &\overline{x}^{(K)}_{k} = \overline{y}^{(K)}_{k}  := \overline{e}_{k+N+1}^\top \quad k \in {-N,..,0}, \\
    &\overline{g}^{x(K)}_{k} = \overline{g}^{y(K)}_{k}:= \overline{e}_{k+N+2}^\top \quad k \in {0,..,K}, \\
    &\overline{h}^{x(K)}_{k} = \overline{h}^{y(K)}_{k}:= \overline{e}_{k+K+N+3}^\top \quad k \in {0,..,K}, \\
    &\overline{f}^{x(K)}_{k} = \overline{f}^{y(K)}_{k} := \overline{e}_{k+1}^\top \quad k \in {0,..,K}
\end{align*}
for $K \in \{N,N+1\}$. The rest of the vectors are defined in accordance with algorithm \eqref{method:M} that iterates for $k = 0,\ldots,K$ ($K+1$ iteration). To encode interpolation conditions, we introduce $m^{x}_{ij(K)}, m^{y}_{ij(K)} \in \R^{K+1}$ and $M^{x}_{ij(K)}, M^{y}_{ij(K)} \in \SM^{N+2K+3}$: 
\begin{align*}
    &m^{x(K)}_{ij} := (L_x-\mu_x)(\overline{f}^{x(K)}_{i} -\overline{f}^{x(K)}_{j})^\top, \quad m^{y(K)}_{ij} := (L_y-\mu_y)(\overline{f}^{y(K)}_{i} -\overline{f}^{y(K)}_{j})^\top, \\
    &M^{x(K)}_{ij} := 
    \begin{bmatrix} 
        \overline{u}^{x(K)}_i\\ \overline{u}^{x(K)}_j\\ \overline{g}^{x(K)}_i\\ 
        \overline{g}^{x(K)}_j
    \end{bmatrix}^\top
    M^x
    \begin{bmatrix} 
        \overline{u}^{x(K)}_i\\ \overline{u}^{x(K)}_j\\ \overline{g}^{x(K)}_i\\ 
        \overline{g}^{x(K)}_j
    \end{bmatrix} ,\quad
    M^{y(K)}_{ij} := 
    \begin{bmatrix} 
        \overline{u}^{y(K)}_i\\ \overline{u}^{y(K)}_j\\ \overline{g}^{y(K)}_i\\ 
        \overline{g}^{y(K)}_j
    \end{bmatrix}^\top
    M^y
    \begin{bmatrix} 
        \overline{u}^{y(K)}_i\\ \overline{u}^{y(K)}_j\\ \overline{g}^{y(K)}_i\\ 
        \overline{g}^{y(K)}_j
    \end{bmatrix},
\end{align*}
where $M^x, M^y \in \SM^4$ are defined as \eqref{eq:interp_matrix}. To encode interpolation constraints for bilinear part, we define $B^{(K)}_{Hx}, B^{(K)}_{Hy} \in \R^{(N+2K+3)\times(K+1)}$, $B^{(K)}_{vx}, B^{(K)}_{vy} \in \R^{(N+2K+3)\times(K+1)}$:
\begin{align*}
    &B^{(K)}_{Hx} = \left[(\overline{h}^{x(K)}_0)^\top, \quad ..., \quad (\overline{h}^{x(K)}_K)^\top\right], \quad B^{(K)}_{Hy} = \left[(\overline{h}^{y(K)}_0)^\top, \quad ..., \quad (\overline{h}^{y(K)}_K)^\top\right], \\
    &B^{(K)}_{vx} = \left[(\overline{v}^{x(K)}_{0}){}^\top, \quad ..., \quad (\overline{v}^{x(K)}_{K}){}^\top\right], \quad
    B^{(K)}_{vy} = \left[(\overline{v}^{y(K)}_{0}){}^\top, \quad ..., \quad (\overline{v}^{y(K)}_{K}){}^\top\right].
\end{align*}

Next, write a Lyapunov function and its decrease after one iteration. We introduce vectors $v^{x(K)}_k, v^{y(K)}_k \in \R^{K+1}$ and matrices $\mathcal V^{x(K)}_k, \mathcal V^{y(K)}_k \in \SM^{N+2K+3}$ that correspond to linear and quadratic parts respectively:
\begin{align*}
    &v^{x(K)}_k := p_x^\top \overline{\mathbf{f}}^{x(K)}_k, \quad v^{y(K)}_k := p_y^\top\overline{\mathbf{f}}^{y(K)}_k, \\ 
    &\mathcal V^{x(K)}_k := 
    \begin{bmatrix} 
        \overline{\mathbf{x}}^{(K)}_k \\ \overline{\mathbf{g}}^{x(K)}_k \\
        \overline{\mathbf{h}}^{x(K)}_k 
    \end{bmatrix}^\top
    P_x
    \begin{bmatrix} 
        \overline{\mathbf{x}}^{(K)}_k \\ \overline{\mathbf{g}}^{x(K)}_k \\
        \overline{\mathbf{h}}^{x(K)}_k 
    \end{bmatrix}, \quad 
    \mathcal V^{y(K)}_k := 
    \begin{bmatrix} 
        \overline{\mathbf{y}}^{(K)}_k \\ \overline{\mathbf{g}}^{y(K)}_k \\
        \overline{\mathbf{h}}^{y(K)}_k 
    \end{bmatrix}^\top
    P_y
    \begin{bmatrix} 
        \overline{\mathbf{y}}^{(K)}_k \\ \overline{\mathbf{g}}^{y(K)}_k \\
        \overline{\mathbf{h}}^{y(K)}_k 
    \end{bmatrix}.
\end{align*}
Matrices $\overline{\mathbf{x}}^{(K)}_k, \overline{\mathbf{g}}^{x(K)}_k, \overline{\mathbf{h}}^{x(K)}_k \in \R^{(N+1)\times(N+2K+3)}$, $\overline{\mathbf{y}}^{(K)}_k, \overline{\mathbf{g}}^{y(K)}_k,
\overline{\mathbf{h}}^{y(K)}_k \in \R^{(N+1)\times(N+2K+3)}$ and $\overline{\mathbf{f}}^{x(K)}_k, \overline{\mathbf{f}}^{y(K)}_k \in \R^{(N+1)\times(K+1)}$ are defined as follows:

\begin{align*}
    &\overline{\mathbf{x}}^{(K)}_k =
\begin{bmatrix} 
\overline{x}^{(K)}_k\\.\\.\\ \overline{x}^{(K)}_{k-N} 
\end{bmatrix}, \quad 
\overline{\mathbf{g}}^{x(K)}_k =
\begin{bmatrix} 
\overline{g}^{x(K)}_k\\.\\.\\ \overline{g}^{x(K)}_{k-N} 
\end{bmatrix}, \quad 
\overline{\mathbf{h}}^{x(K)}_k =
\begin{bmatrix} 
\overline{h}^{x(K)}_k\\.\\.\\ \overline{h}^{x(K)}_{k-N} 
\end{bmatrix}, \quad 
\overline{\mathbf{f}}^{x(K)}_k =
\begin{bmatrix} 
\overline{f}^{x(K)}_k \\.\\.\\ \overline{f}^{x(K)}_{k-N} 
\end{bmatrix}, \\
&\ldots
\end{align*}
The decrease of quadratic and linear parts of Lyapunov function:
\begin{align*}
    &\delta v^{x(K)}_k := v^{x(K)}_{k+1} - \rho^2 v^{x(K)}_{k}, \quad 
    \delta v^{y(K)}_k := v^{y(K)}_{k+1} - \rho^2 v^{y(K)}_{k}, \\
    &\delta \mathcal V^{x(K)}_k := \mathcal V^{x(K)}_{k+1} - \rho^2 \mathcal V^{x(K)}_{k}, \quad \delta \mathcal V^{y(K)}_k := \mathcal V^{y(K)}_{k+1} - \rho^2 \mathcal V^{y(K)}_{k},
\end{align*}
where $\rho$ is the rate of linear convergence, that we seek to certify. Finally, we define index sets $I_K := \{0,1,\ldots,K,\ast \}$ and variables:
\begin{align*}
    &P_x,P_y \in \SM^{3(N+1)}, \quad p_x, p_y \in \R^{N+1}, \tag{Lyap-vars}\\
    &C_1, C_2, C_3 \in \SM^{N+1}, \quad \lambda_{ij}, \overline{\lambda}_{ij} \in \R \quad \text{for } \hspace{2pt} i,j \in I_N, \\
    &F_1, F_2, F_3 \in \SM^{N+2}, \quad \eta_{ij}, \overline{\eta}_{ij} \in \R \quad \text{for } \hspace{2pt} i,j \in I_{N+1}.\\
\end{align*}
Contraction factor $\rho$, step sizes of \eqref{method:M}, and functional constants are parameters of the problem. We present the program itself:

\begin{align*} \label{problem:lyapunov}\tag{Lyap-SDP}
    &\textbf{feasible} \hspace{4pt} (\text{Lyap-vars})  \\
    &0 \leqslant \lambda_{ij}, \quad 0 \leqslant \overline{\lambda}_{ij} \quad \forall i,j \in I_N, \\
    &0 \leqslant \eta_{ij}, \quad 0 \leqslant \overline{\eta}_{ij} \quad \forall i,j \in I_{N+1}, \\
    &0 \preceq C_2, \quad 0 \preceq C_3, \quad 0 \preceq F_2, \quad 0 \preceq F_3, \\
    &0 \prec \mathcal V^{x(N)}_N - \sum_{i,j\in I_N} \lambda_{ij} M^{x(N)}_{ij} + B^{(N)}_{vx} C_1 B^{(N)}_{Hx}{}^\top + B^{(N)}_{Hx} C_2 B^{(N)}_{Hx}{}^\top - L_{xy}^2 B^{(N)}_{vx} C_3 B^{(N)}_{vx}{}^\top, \\
    &0 \prec \mathcal V^{y(N)}_N - \sum_{i,j \in I_N} \overline{\lambda}_{ij} M^{y(N)}_{ij} - B^{(N)}_{Hy} C_1 B^{(N)}_{vy}{}^\top - L_{xy}^2 B^{(N)}_{vy}C_2 B^{(N)}_{vy}{}^\top + B^{(N)}_{Hy} C_3 B^{(N)}_{Hy}{}^\top, \\
    &0 < v^{x(N)}_N - \sum_{i,j \in I_N} \lambda_{ij} m^{x(N)}_{ij}, \\
    &0  < v^{y(N)}_N - \sum_{i,j \in I_N} \overline{\lambda}_{ij} m^{y(N)}_{ij}, \\
    &0 \succeq \delta \mathcal V^{x(N+1)}_N + \sum_{i,j\in I_{N+1}} \eta_{ij} M^{x(N+1)}_{ij} \\&\quad - B^{(N+1)}_{vx} F_1 B^{(N+1)}_{Hx}{}^\top - B^{(N+1)}_{Hx} F_2 B^{(N+1)}_{Hx}{}^\top + L_{xy}^2 B^{(N+1)}_{vx} F_3 B^{(N+1)}_{vx}{}^\top, \\
    &0 \succeq \delta \mathcal V^{y(N+1)}_N + \sum_{i,j\in I_{N+1}} \overline{\eta}_{ij} M^{y(N+1)}_{ij} \\&\quad + B^{(N+1)}_{Hy} F_1 B^{(N+1)}_{vy}{}^\top + L_{xy}^2 B^{(N+1)}_{vy} F_2 B^{(N+1)}_{vy}{}^\top - B^{(N+1)}_{Hy} F_3 B^{(N+1)}_{Hy}{}^\top, \\
    &0 \geqslant \delta v^{x(N+1)}_N + \sum_{i,j \in I_{N+1}} \eta_{ij} m^{x(N+1)}_{ij}, \\
    &0 \geqslant \delta v^{y(N+1)}_N + \sum_{i,j \in I_{N+1}} \overline{\eta}_{ij} m^{y(N+1)}_{ij}.
\end{align*}

The following theorem states that feasibility of \eqref{problem:lyapunov} is equivalent to existence of a quadratic Lyapunov function of the
form~\eqref{quadratic_lyapunov_family} that decreases (at least) by the factor of $\rho^2$ at each iteration.

\begin{theorem} [Correctness of \eqref{problem:lyapunov}] Consider applying algorithm \eqref{method:M} to $f \in \mathcal K_{\mu_x,\mu_y,L_x,L_y,L_{xy}}$. If \eqref{problem:lyapunov} is feasible then  there exists a quadratic Lyapunov function of the
form~\eqref{quadratic_lyapunov_family} with contraction $\rho$ that is valid for all $d_x, d_y \in \N$.
\label{teo_lyapunov_main}
\end{theorem}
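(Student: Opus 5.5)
The plan follows the S-procedure/lifting argument of \cite{taylor2018lyapunov}, adapted to the two Gram matrices $G_x, G_y$ and to the bilinear interpolation constraints of Theorem~\ref{teo:bil_int}. We take $\mathcal V$ of the form \eqref{quadratic_lyapunov_family}, with $P_x,P_y,p_x,p_y$ read off from a feasible point of \eqref{problem:lyapunov}, and check the properties of Definition~\ref{fed:lyapunov_func} in two blocks. The first block is positivity of $\mathcal V$ away from $z_\ast$ (using the window $K=N$). The second is the contraction $\mathcal V(\xi_{k+1})\leqslant\rho^2\mathcal V(\xi_k)$ (using the window $K=N+1$).

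First fix any $d_x,d_y$, any $f\in\mathcal K_{\mu_x,\mu_y,L_x,L_y,L_{xy}}$ and any trajectory of \eqref{method:M}, translated so that $z_\ast=0$ and $f^x_\ast=f^y_\ast=0$. For a window of $K+1$ consecutive iterates, form $W_x,W_y$ and the function-value vectors as in Section~\ref{sec:pep_construction}, and set $G_x=W_x^\top W_x\succeq0$, $G_y=W_y^\top W_y\succeq0$. By construction of the basis vectors:
\begin{itemize}
\item $\mathcal V(\xi_k)=\mathrm{tr}(G_x\mathcal V^{x(K)}_k)+\mathrm{tr}(G_y\mathcal V^{y(K)}_k)+v^{x(K)}_k{}^\top\mathbf f^x+v^{y(K)}_k{}^\top\mathbf f^y$.
\item The necessity parts of Theorem~\ref{teo:SC_int} and Theorem~\ref{teo:bil_int} (applied to the components $p,q,A$) give $m^{x}_{ij}{}^\top\mathbf f^x+\mathrm{tr}(G_xM^x_{ij})\geqslant0$, its $y$-analogue, the equality $B_{Hx}^\top G_xB_{vx}=B_{vy}^\top G_yB_{Hy}$, and the two LMI inequalities.
\end{itemize}

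For the contraction step, take the trace of the first $\succeq$-constraint against $G_x$ and of the second against $G_y$, add the two scalar $\geqslant$-constraints multiplied by $\mathbf f^x,\mathbf f^y$, and sum. The result is
\[
\mathcal V(\xi_{N+1})-\rho^2\mathcal V(\xi_N)+\sum\eta_{ij}(\cdot)+\sum\overline\eta_{ij}(\cdot)+\text{(bilinear terms)}\leqslant0 .
\]
Each $\eta$-term is a nonnegative multiplier times a nonnegative interpolation quantity.

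The bilinear terms are where the cross-coupling enters, and this is the step I expect to be the main obstacle.
\begin{itemize}
\item The $F_1$ contributions from the $x$ and $y$ blocks combine to $\mathrm{tr}\big(F_1(B_{vy}^\top G_yB_{Hy}-B_{Hx}^\top G_xB_{vx})\big)$ (up to transpose/symmetrization). This vanishes by the equality constraint.
\item The $F_2$ contributions combine to $-\mathrm{tr}\big(F_2(B_{Hx}^\top G_xB_{Hx}-L_{xy}^2B_{vy}^\top G_yB_{vy})\big)\geqslant0$, because $F_2\succeq0$ and the bracket is $\preceq0$.
\item The $F_3$ contributions are handled the same way.
\end{itemize}
The signs must line up exactly, so I will carefully match each $B$-product in \eqref{problem:lyapunov} with the transposed form appearing in Theorem~\ref{teo:bil_int}. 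I also need $F_1$ to pair correctly with a nonsymmetric matrix. Since the equality constraint forces $B_{Hx}^\top G_xB_{vx}$ to equal $B_{vy}^\top G_yB_{Hy}$, the combined $F_1$ term is zero for any $F_1$, so symmetry of $F_1$ is not actually needed.

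Dropping all the nonnegative terms yields $\mathcal V(\xi_{N+1})\leqslant\rho^2\mathcal V(\xi_N)$. Time-invariance of \eqref{method:M} extends this to every $k\geqslant N$.

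The positivity constraints with $K=N$ are handled the same way, except that now the matrix inequalities are strict. Summing them against $G_x,G_y$, and using the same cancellations with $C_1,C_2,C_3$ and $\lambda,\overline\lambda$, gives $\mathcal V(\xi_N)\geqslant0$. Equality holds only when $\mathrm{tr}(G_x S_x)=\mathrm{tr}(G_y S_y)=0$ for the positive definite slack matrices $S_x,S_y$. That forces $W_x=W_y=0$, i.e. the state equals the fixed point. Positive definiteness of the slacks also gives a lower bound of the form $c\,\|\xi\|^2$ on the quadratic part, which yields radial unboundedness.

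No step used the values of $d_x,d_y$: everything is expressed through Gram matrices and the dimension-free interpolation conditions. Hence the certificate is valid for all $d_x,d_y\in\N$.
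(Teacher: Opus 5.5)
Your proposal is correct and follows the paper's proof. The paper packages the same trace-and-sum S-procedure as Theorem~\ref{app:teo:pos_quadratics} and applies it twice: once with $K=N$ (strict constraints, giving $\mathcal V(\xi_N)>0$), and once with $K=N+1$ and $Q_x=-\delta\mathcal V^{x(N+1)}_N$, $Q_y=-\delta\mathcal V^{y(N+1)}_N$. It uses the same cancellation of the $C_1$/$F_1$ terms through the equality constraint and the same sign argument for $C_2,C_3,F_2,F_3$.

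One point that both you and the paper leave implicit is that the constraints involving $v^{x(K)}_k$ and $m^{x}_{ij}$ are componentwise inequalities in $\R^{K+1}$, not scalar ones. Pairing them with $\mathbf f^x,\mathbf f^y$ therefore needs $\mathbf f^x,\mathbf f^y\geqslant 0$. This follows from the interpolation inequality with $j=\ast$ under the normalization $g^x_\ast=g^y_\ast=0$, $f^x_\ast=f^y_\ast=0$.
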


With Theorem~\ref{teo_lyapunov_main}, we can perform bisection on $\rho$ to find
the minimum $\rho$ such that \eqref{problem:lyapunov} is feasible to obtain
the fastest linear convergence rate.

\subsection{Proof of Theorem~\ref{teo_lyapunov_main}}

Define vectors $\textbf{x} \in \R^{(N+1)d_x}$, $\textbf{g}^x, \textbf{h}^x \in \R^{(K+1)d_x}$, $\textbf{y} \in \R^{(N+1)d_y}$, $\textbf{g}^y, \textbf{h}^y \in \R^{(K+1)d_y}$, $\textbf{f}^x, \textbf{f}^y\in \R^{K+1}$:
\begin{align*}
    &\textbf{x} := \left[(x_{-N} - x_*)^\top, ... , (x_{0} - x_*)^\top\right]^\top, \\
    &\textbf{g}^x := \left[(g^x_{0} - g^x_{*})^\top, ... , (g^x_{K} - g^x_{*})^\top\right]^\top, \\
    &\textbf{h}^x := \left[(h^x_{0} - h^x_{*})^\top, ... , (h^x_{K} - h^x_{*})^\top\right]^\top, \\
    &\textbf{f}^x := \left[f^x_0 - f^x_*, ... , f^x_{K} - f^x_*\right]^\top, \\
    &\ldots
\end{align*}
Denote state $\xi := (\textbf{x},\ \textbf{g}^x, \textbf{f}^x, \textbf{y}, \textbf{g}^y, \textbf{f}^y,    \textbf{h}^x, \textbf{h}^y)$. For convenience, we also denote $\overline{\xi}_x = \left[\textbf{x}^\top,\ \textbf{g}^x{}^\top, \textbf{h}^x{}^\top\right]^\top$ and $\overline{\xi}_y = \left[\textbf{y}^\top,\ \textbf{g}^y{}^\top, \textbf{h}^y{}^\top\right]^\top$. The standard Gram matrix, containing all inner products between $x_i-x_*$ for $i \in \{-N,\ldots, 0\}$ and $g^x_i$, $h^x_i$ for $i \in \{0,\ldots, K\}$: $G_x = W_x^\top W_x \in \SM^{N+2K+3}$, where
\begin{align*}
    W_x := \left[x_{-N} - x_* \quad ... \quad x_0 - x_* \quad g^x_{0} \quad ... \quad g^x_{K} \quad h^x_{0} \quad ... \quad h^x_{K}\right].
\end{align*}
Gram matrix $G_y = W_y^\top W_y \in \SM^{N+2K+3}$ is defined the same way. \par 

By Theorem~\ref{teo:bil_coupling_int} and using the newly defined notation, $f \in \mathcal K_{\mu_x,\mu_y,L_x,L_y,L_{xy}}$ is equivalent to:
\begin{align}
    &0 \leqslant \phi^{x}_{ij} =  m^{x}_{ij}{}^\top \textbf{f}^x + \text{tr}\left(G_x M^{x}_{ij}\right) \quad \forall i,j \in I_K, \label{app:eq:inter_cond} \\
    &0 \leqslant \phi^{y}_{ij} = m^{y}_{ij}{}^\top \textbf{f}^y + \text{tr}\left(G_y M^{y}_{ij}\right) \quad \forall i,j \in I_K,\notag \\
    &B_{Hx}^\top G_x B_{vx} = B_{vy}^\mathrm{T}G_yB_{Hy}, \notag \\
    &B_{Hx}^\top G_x B_{Hx} \preceq L_{xy}^2 B_{vy}^\top G_yB_{vy}, \notag\\ 
    &B_{Hy}^\top G_y B_{Hy} \preceq L_{xy}^2 B_{vx}^\top G_xB_{vx}, \notag
\end{align}
where $I_K = \{0,\ldots,K,*\}$. \par

\begin{theorem} \label{app:teo:pos_quadratics} Suppose $f \in \mathcal K_{\mu_x,\mu_y,L_x,L_y,L_{xy}}$. Consider
\begin{align}
    \sigma(\xi) = \overline{\xi}_x^
    \top \left(Q_x \otimes \mathcal I_{d_x}\right) \overline{\xi}_x + \overline{\xi}_y^
    \top \left(Q_y \otimes \mathcal I_{d_y}\right) \overline{\xi}_y + q_x^\top \textbf{f}^x + q_y^\top \textbf{f}^y,
\label{app:eq:lyap_quadratic}
\end{align}
where $Q_x, Q_y \in \SM^{N+2K+3}$ and $q_x, q_y \in \R^{K+1}$. If there exist $\lambda_{ij}, \overline{\lambda}_{ij} \in \R$, and $C_1, C_2, C_3 \in \SM^{K+1}$ such that
\begin{align}
    & \lambda_{ij} \geqslant 0,\ \overline{\lambda}_{ij} \geqslant 0 \quad \forall i,j \in I_K, \\
    &C_2 \succeq 0,\  C_3 \succeq 0, \\
    &0 \preceq Q_x - \sum_{i,j\in I_K} \lambda_{ij} M^{x}_{ij} + B_{vx} C_1 B_{Hx}^\top + B_{Hx} C_2 B_{Hx}^\top - L_{xy}^2 B_{vx} C_3 B_{vx}^\top, \label{app:eq:pos_quad_1}\\
    &0 \preceq Q_y - \sum_{i,j \in I_K} \overline{\lambda}_{ij} M^{y}_{ij} - B_{Hy} C_1 B_{vy}^\top - L_{xy}^2 B_{vy}C_2 B_{vy}^\top + B_{Hy} C_3 B_{Hy}^\top, \label{app:eq:pos_quad_2}\\
    &0 \leqslant q_x - \sum_{i,j \in I_K} \lambda_{ij} m^{x}_{ij}, \label{app:eq:pos_quad_3}\\
    &0 \leqslant q_y - \sum_{i,j \in I_K} \overline{\lambda}_{ij} m^{y}_{ij}, \label{app:eq:pos_quad_4}
\end{align}
then $\sigma(\xi) \geqslant 0$.
\end{theorem}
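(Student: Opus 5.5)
The plan is an S-procedure (weak duality) argument carried out on the Gram matrices. First we express $\sigma(\xi)$ through $G_x,G_y$. Since $\overline{\xi}_x$ stacks the columns of $W_x$, we have
\[
\overline{\xi}_x^\top (Q_x\otimes\mathcal I_{d_x})\overline{\xi}_x=\mathrm{tr}(G_xQ_x),
\]
and similarly for $y$. This gives $\sigma(\xi)=\mathrm{tr}(G_xQ_x)+\mathrm{tr}(G_yQ_y)+q_x^\top\textbf{f}^x+q_y^\top\textbf{f}^y$.

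Next, because $f\in\mathcal K_{\mu_x,\mu_y,L_x,L_y,L_{xy}}$, the necessity direction of Theorem~\ref{teo:bil_coupling_int} applies, through its Gram form \eqref{app:eq:inter_cond}. It gives $\phi^x_{ij}\ge0$, $\phi^y_{ij}\ge0$, the equality $E:=B_{Hx}^\top G_xB_{vx}-B_{vy}^\top G_yB_{Hy}=0$, and the two semidefinite inequalities $S_2:=L_{xy}^2B_{vy}^\top G_yB_{vy}-B_{Hx}^\top G_xB_{Hx}\succeq0$ and $S_3:=L_{xy}^2B_{vx}^\top G_xB_{vx}-B_{Hy}^\top G_yB_{Hy}\succeq0$.

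The core step is to decompose $\sigma$ as
\[
\sigma=\mathrm{tr}(G_xR_x)+\mathrm{tr}(G_yR_y)+r_x^\top\textbf{f}^x+r_y^\top\textbf{f}^y+\sum\lambda_{ij}\phi^x_{ij}+\sum\overline\lambda_{ij}\phi^y_{ij}+\mathrm{tr}(C_2S_2)+\mathrm{tr}(C_3S_3)+\mathrm{tr}(C_1 E)\cdot(\pm1).
\]
Here $R_x,R_y$ are the matrices on the right of \eqref{app:eq:pos_quad_1}--\eqref{app:eq:pos_quad_2}, and $r_x,r_y$ are the vectors in \eqref{app:eq:pos_quad_3}--\eqref{app:eq:pos_quad_4}. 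To check this identity we use cyclicity of the trace:
\[
\mathrm{tr}(C_2B_{Hx}^\top G_xB_{Hx})=\mathrm{tr}(G_xB_{Hx}C_2B_{Hx}^\top),\qquad \mathrm{tr}(C_1B_{Hx}^\top G_xB_{vx})=\mathrm{tr}(G_xB_{vx}C_1B_{Hx}^\top),
\]
and analogously for the $y$-terms. The signs then match the statement. The $C_2$ term appears as $+B_{Hx}C_2B_{Hx}^\top$ in $R_x$ and $-L_{xy}^2B_{vy}C_2B_{vy}^\top$ in $R_y$. The $C_3$ term appears as $-L_{xy}^2B_{vx}C_3B_{vx}^\top$ in $R_x$ and $+B_{Hy}C_3B_{Hy}^\top$ in $R_y$. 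The $C_1$ term appears as $+B_{vx}C_1B_{Hx}^\top$ in $R_x$ and $-B_{Hy}C_1B_{vy}^\top$ in $R_y$, which is $\mathrm{tr}(C_1E^\top)$ up to transposition. Thus $\sigma-\sum(\text{multiplier}\times\text{constraint})$ equals exactly $\mathrm{tr}(G_xR_x)+\mathrm{tr}(G_yR_y)+r_x^\top\textbf f^x+r_y^\top\textbf f^y$.

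With the identity in hand, each term is nonnegative, which proves $\sigma\ge0$:
\begin{itemize}
\item $\mathrm{tr}(G_xR_x)\ge0$ because $G_x\succeq0$ and $R_x\succeq0$; likewise for $y$.
\item The linear terms in $\textbf f$ are nonnegative provided $\textbf f^x,\textbf f^y\ge0$ componentwise. Here $f^x_i-f^x_*\ge0$ because $x_*$ minimizes $p$ after the normalization $g^x_*=-h^x_*$. If the normalization instead sets the stationarity differently, we use the $\phi_{i*}$ constraints with $g_*=0$ and absorb them into the $\lambda$'s.
\item The $\lambda\phi$ terms are nonnegative since $\lambda,\overline\lambda\ge0$.
\item The $C_2,C_3$ terms are traces of products of PSD matrices, hence nonnegative.
\item The $C_1$ term vanishes because $E=0$.
\end{itemize}

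The main obstacle is the bookkeeping in the core step: matching the sign and transpose conventions of the $C_1$ coupling term between the $x$ and $y$ blocks. Because $C_1$ is symmetric, $\mathrm{tr}(C_1E)=\mathrm{tr}(C_1E^\top)$, so the orientation is harmless. A second delicate point is the sign of $\textbf f^x,\textbf f^y$. If nonnegativity of the function values is not automatic under the paper's normalization $f_*=0$, the plan is to interpret \eqref{app:eq:pos_quad_3}--\eqref{app:eq:pos_quad_4} as equalities, in the spirit of the strict and nonstrict forms in \eqref{problem:lyapunov}, or to note that the constraints $\phi_{i*}\ge0$ already force $f^x_i\ge0$ via the $\mathcal F_{\mu_x,L_x}$ inequality with $g_*=0$.
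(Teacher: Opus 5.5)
Your proposal is correct and follows the paper's argument: the paper's one-line proof is this same S-procedure. It applies the quadratic forms to the two LMIs, pairs the vector inequalities with $\textbf{f}^x,\textbf{f}^y$, and sums against the multiplier-weighted interpolation constraints; you additionally write out the trace and transpose bookkeeping for $C_1,C_2,C_3$. You also make explicit that $\textbf{f}^x,\textbf{f}^y\geqslant 0$ is needed, which the paper uses silently; this follows from $\phi^x_{i*},\phi^y_{i*}\geqslant 0$ under the normalization $x_*=y_*=0$, $g^x_*=g^y_*=0$ (not merely $g^x_*=-h^x_*$), so your fallback of reinterpreting the vector inequalities as equalities is unnecessary.
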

\begin{proof} 
First, apply $\overline{\xi}_x^\top \left(\cdot \otimes \mathcal I_{d_x}\right) \overline{\xi}_x$ and $\overline{\xi}_y^\top \left(\cdot \otimes \mathcal I_{d_y}\right) \overline{\xi}_y$ to inequalities \eqref{app:eq:pos_quad_1} and \eqref{app:eq:pos_quad_2}. Also, multiply inequalities \eqref{app:eq:pos_quad_3}, \eqref{app:eq:pos_quad_4} by $\textbf{f}^x{}^\top$ and $\textbf{f}^y{}^\top$ respectively. Summation of the resulting expressions yields the desired statement.
\end{proof}

Now we show that the reverse statement holds if we assume $\min(d_x,d_y) \geqslant N+2K+3$ and absence of duality gap. Suppose $\sigma(\xi) > 0$, which is equivalent to $\sigma_{\ast}(\varepsilon) > 0$ for all $\varepsilon > 0$, where
\begin{align*} 
\sigma_{\ast}^{(d_x,d_y)}(\varepsilon) &:= \underset{\xi}{\min} \ \sigma(\xi) \quad \text{s.t.} \\
&f \in \mathcal K_{\mu_x,\mu_y,L_x,L_y,L_{xy}}, \\
&\xi \quad \text{is generated by \eqref{method:M}}, \\
&\|\textbf{x}\|^2_2 + \|\textbf{g}^x\|^2_2 + \|\textbf{h}^x\|^2_2 + \|\textbf{y}\|^2_2 + \|\textbf{g}^y\|^2_2 + \|\textbf{h}^y\|^2_2 + \textbf{1}^\top \textbf{f}^x + \textbf{1}^\top \textbf{f}^y = \varepsilon.
\end{align*}
Express $\sigma(\xi)$ is terms of Gram matrices:
\begin{align*}
    \sigma(G_x,G_y,\textbf{f}^x,\textbf{f}^y) =  \text{tr}\left(G_x Q_x\right) + \text{tr}\left(G_y Q_y\right) + q_x^\top \textbf{f}^x + q_y^\top \textbf{f}^y,
\end{align*}
and substitute the interpolation conditions \eqref{app:eq:inter_cond}. Also, as was discussed in \cite{taylor2017smooth, taylor2018lyapunov}, as well as in Section~\ref{sec:pep_construction}, we want the problem's value to not depend on dimensions. Because we assumed $\min(d_x,d_y) \geq N+2K+3$, we can safely drop rank constraints:
\begin{align*} 
\sigma_{\ast}^{(\infty,\infty)}(\varepsilon) &:= \underset{G_x,G_y,\textbf{f}^x,\textbf{f}^y}{\min} \ \text{tr}\left(G_x Q_x\right) + \text{tr}\left(G_y Q_y\right) + q_x^\top \textbf{f}^x + q_y^\top \textbf{f}^y \quad \text{s.t.} \\
& 0 \leqslant m^{x}_{ij}{}^\top \textbf{f}^x + \text{tr}\left(G_x M^{x}_{ij}\right) \quad \forall i,j \in I, \\
&0 \leqslant m^{y}_{ij}{}^\top \textbf{f}^y + \text{tr}\left(G_y M^{y}_{ij}\right) \quad \forall i,j \in I, \\
&B_{Hx}^\top G_x B_{vx} = B_{vy}^\mathrm{T}G_yB_{Hy}, \\
&B_{Hx}^\top G_x B_{Hx} \preceq L_{xy}^2 B_{vy}^\top G_yB_{vy}, \\
&B_{Hy}^\top G_y B_{Hy} \preceq L_{xy}^2 B_{vx}^\top G_xB_{vx}, \\
&\text{tr}(G_x) + \text{tr}(G_y) + \textbf{1}^\top \textbf{f}^x + \textbf{1}^\top \textbf{f}^y = \varepsilon, \\
&\textbf{f}^x \geqslant 0, \ \textbf{f}^y \geqslant 0, \\
&G_x \succeq 0,\  G_y \succeq 0.
\end{align*}

Suppose Slater's condition holds (there exists feasible point such that $G_x \succ 0$ and $G_y \succ 0$) \cite{boyd2004convex}. Then the optimal value of the dual problem is equal to the value of the primal problem. The dual value is given by

\begin{align*} 
d^{(\infty,\infty)}(\varepsilon) &:= \underset{\{\lambda_{ij}\}, \{\overline{\lambda}_{ij}\}, \nu, C_1, C_2, C_3}{\max}  \nu \ \varepsilon \quad \text{s.t.} \\
&\lambda_{ij} \geqslant 0,\  \overline{\lambda}_{ij} \geqslant 0 \quad \forall i,j \in I, \\
&C_2 \succeq 0,\ C_3 \succeq 0, \\
&Q_x - \sum_{i,j \in I} \lambda_{ij} M^{x}_{ij} + B_{vx} C_1 B_{Hx}^\top + B_{Hx} C_2 B_{Hx}^\top - L_{xy}^2 B_{vx} C_3 B_{vx}^\top \succeq \nu I_{N+2K+3}, \\
&Q_y - \sum_{i,j \in I} \overline{\lambda}_{ij} M^{y}_{ij} - B_{Hy} C_1 B_{vy}^\top - L_{xy}^2 B_{vy}C_2 B_{vy}^\top + B_{Hy} C_3 B_{Hy}^\top \succeq \nu I_{N+2K+3}, \\
&q_x - \sum_{i,j \in I} \lambda_{ij} m^{x}_{ij} \geqslant \nu \textbf{1}_{K+1}, \\
&q_y - \sum_{i,j \in I} \overline{\lambda}_{ij} m^{y}_{ij} \geqslant \nu \textbf{1}_{K+1}.
\end{align*}
From $\sigma_{\ast}(\varepsilon) > 0$ for all $\varepsilon > 0$ and strong duality, we have
\begin{align*}
    &Q_x - \sum_{i,j \in I} \lambda_{ij} M^{x}_{ij} + B_{vx} C_1 B_{Hx}^\top + B_{Hx} C_2 B_{Hx}^\top - L_{xy}^2 B_{vx} C_3 B_{vx}^\top \succ 0, \\
    &Q_y - \sum_{i,j \in I} \overline{\lambda}_{ij} M^{y}_{ij} - B_{Hy} C_1 B_{vy}^\top - L_{xy}^2 B_{vy}C_2 B_{vy}^\top + B_{Hy} C_3 B_{Hy}^\top \succ 0, \\
    &q_x - \sum_{i,j \in I} \lambda_{ij} m^{x}_{ij} > 0, \\
    &q_y - \sum_{i,j \in I} \overline{\lambda}_{ij} m^{y}_{ij} > 0.
\end{align*}

This concludes the proof of necessity. Moving forward with the proof of Theorem~\ref{teo_lyapunov_main}, the expression~\eqref{quadratic_lyapunov_family} for Lyapunov function's value at $\xi_k$:
\begin{align*}
    \mathcal{V}(\xi_k) = \overline{\xi}_x^
    \top \left(\mathcal{V}^{x(K)}_k \otimes \mathcal I_{d_x}\right) \overline{\xi}_x + \overline{\xi}_y^
    \top \left(\mathcal{V}^{y(K)}_k \otimes \mathcal I_{d_y}\right) \overline{\xi}_y + (v^{x(K)}_k)^\top \textbf{f}^x + (v^{y(K)}_k)^\top \textbf{f}^y.
\end{align*}
The expression for the (additional) decrease $\delta \mathcal{V}(\xi_k) := \mathcal{V}(\xi_{k+1}) - \rho^2 \mathcal{V}(\xi_{k})$ after one iteration:
\begin{align*}
    \delta \mathcal{V}(\xi_k) = \overline{\xi}_x^
    \top \left(\delta \mathcal{V}^{x(K)}_k \otimes \mathcal I_{d_x}\right) \overline{\xi}_x + \overline{\xi}_y^
    \top \left(\delta \mathcal{V}^{y(K)}_k \otimes \mathcal I_{d_y}\right) \overline{\xi}_y + (\delta v^{x(K)}_k)^\top \textbf{f}^x + (\delta v^{y(K)}_k)^\top \textbf{f}^y.
\end{align*}
From Theorem~\ref{app:teo:pos_quadratics}, $\mathcal{V}(\xi_N) > 0$ holds if there exist such $\lambda_{ij}, \overline{\lambda}_{ij} \in \R$, and $C_1, C_2, C_3 \in \SM^{N+1}$ that
\begin{align*}
    &\lambda_{ij} \geqslant 0,\ \overline{\lambda}_{ij} \geqslant 0 \quad \forall i,j \in I_N, \\
    &C_2 \succeq 0,\  C_3 \succeq 0, \\
    &0 \prec \mathcal V^{x(N)}_N - \sum_{i,j\in I_N} \lambda_{ij} M^{x(N)}_{ij} + B^{(N)}_{vx} C_1 B^{(N)}_{Hx}{}^\top + B^{(N)}_{Hx} C_2 B^{(N)}_{Hx}{}^\top - L_{xy}^2 B^{(N)}_{vx} C_3 B^{(N)}_{vx}{}^\top, \\
    &0 \prec \mathcal V^{y(N)}_N - \sum_{i,j \in I_N} \overline{\lambda}_{ij} M^{y(N)}_{ij} - B^{(N)}_{Hy} C_1 B^{(N)}_{vy}{}^\top - L_{xy}^2 B^{(N)}_{vy}C_2 B^{(N)}_{vy}{}^\top + B^{(N)}_{Hy} C_3 B^{(N)}_{Hy}{}^\top, \\
    &0 < v^{x(N)}_N - \sum_{i,j \in I_N} \lambda_{ij} m^{x(N)}_{ij}, \\
    &0  < v^{y(N)}_N - \sum_{i,j \in I_N} \overline{\lambda}_{ij} m^{y(N)}_{ij},
\end{align*}
Next, $\delta \mathcal{V}(\xi_N) \leqslant 0$ holds if there exist such $\eta_{ij}, \overline{\eta}_{ij} \in \R$, and $F_1, F_2, F_3 \in \SM^{N+2}$ that
\begin{align*}
    &\eta_{ij} \geqslant 0,\ \overline{\eta}_{ij} \geqslant 0 \quad \forall i,j \in I_{N+1}, \\
    &F_2 \succeq 0,\  F_3 \succeq 0, \\
    &0 \succeq \delta \mathcal V^{x(N+1)}_N + \sum_{i,j\in I_{N+1}} \eta_{ij} M^{x(N+1)}_{ij} \\&\quad - B^{(N+1)}_{vx} F_1 B^{(N+1)}_{Hx}{}^\top - B^{(N+1)}_{Hx} F_2 B^{(N+1)}_{Hx}{}^\top + L_{xy}^2 B^{(N+1)}_{vx} F_3 B^{(N+1)}_{vx}{}^\top, \\
    &0 \succeq \delta \mathcal V^{y(N+1)}_N + \sum_{i,j\in I_{N+1}} \overline{\eta}_{ij} M^{y(N+1)}_{ij} \\&\quad + B^{(N+1)}_{Hy} F_1 B^{(N+1)}_{vy}{}^\top + L_{xy}^2 B^{(N+1)}_{vy} F_2 B^{(N+1)}_{vy}{}^\top - B^{(N+1)}_{Hy} F_3 B^{(N+1)}_{Hy}{}^\top, \\
    &0 \geqslant \delta v^{x(N+1)}_N + \sum_{i,j \in I_{N+1}} \eta_{ij} m^{x(N+1)}_{ij}, \\
    &0 \geqslant \delta v^{y(N+1)}_N + \sum_{i,j \in I_{N+1}} \overline{\eta}_{ij} m^{y(N+1)}_{ij}.
\end{align*}

If we combine all the constraints above, we get the constraints of the problem~\eqref{problem:lyapunov}. Therefore, if the program is feasible then there exists such quadratic Lyapunov function~\eqref{quadratic_lyapunov_family} that $\mathcal{V}(\xi_N) > 0$ and $\delta \mathcal{V}(\xi_N) \leqslant 0$. This concludes the proof of Theorem~\ref{teo_lyapunov_main}. As we discussed above, under additional assumptions of $\min(d_x,d_y) \geqslant N+2K+3$ and strong duality: if the program is infeasible then such quadratic Lyapunov function does not exist.

\end{document}